\documentclass[12pt,a4paper,reqno]{amsart}

\usepackage[utf8]{inputenc}
\usepackage[T1]{fontenc}

\usepackage{hyperref}
\hypersetup{
  colorlinks=true,
  linkcolor=blue,
  filecolor=blue,      
  urlcolor=blue,
  citecolor=blue,
  pdftitle={},
  pdfpagemode=FullScreen,
}

\usepackage{amsfonts}
\usepackage{amsthm}
\usepackage{amsmath}
\usepackage{amscd}
\usepackage[mathscr]{eucal}
\usepackage{indentfirst}
\usepackage{graphicx}
\usepackage{graphics}
\usepackage{pict2e}
\usepackage{epic}
\usepackage{amssymb}

\numberwithin{equation}{section}

\theoremstyle{plain}
\newtheorem{Th}{Theorem}[section]
\newtheorem{Lemma}[Th]{Lemma}
\newtheorem{Cor}[Th]{Corollary}

\newtheorem*{Theorem-non}{Theorem}
\newtheorem*{Theorem-non2}{Theorem}

\theoremstyle{definition}
\newtheorem*{Proof-non}{Proof of Theorem \ref{Maintheorem} assuming Propositions \ref{Prop1} and \ref{Propm}}
\newtheorem*{Proof-non2}{Proof of (1) (\(\mathbf{m_{1}}\)-estimate) in Proposition \ref{Propm} assuming Proposition \ref{Proposition 5.1}}
\newtheorem*{Proof-non3}{Proof of Theorem \ref{Maintheorem2} assuming Propositions \ref{Prop1} and \ref{Propm}}
\newtheorem*{Proof-non4}{Proof of Proposition \ref{Prop1}}
\newtheorem{Def}[Th]{Definition}

\newtheorem{prop}[Th]{Proposition}
\newtheorem{Rem}[Th]{Remark}

\title[Short Exponential Sums and Ternary Correlations]{Short Exponential Sums and Ternary Correlations of Multiplicative Functions}

\author{Jiseong Kim}

\address{Department of Mathematical Sciences, McNeese State University, Lake Charles, LA 70609, USA}
\email{jiseongk51@gmail.com}

\begin{document}

\begin{abstract}
In this paper, we investigate the average behavior of ternary correlations 
for general $k$-divisor-bounded multiplicative functions, assuming certain 
second moment integral bounds for the associated $L$-functions.
Our approach differs from previous methods based on spectral theory or Heath-Brown-type decompositions, and instead combines the circle method with weighted short exponential-sum bounds. 
The key input is short exponential-sum estimates obtained from integral moment bounds for $L$-functions.

\end{abstract}

\subjclass[2020]{}
\keywords{Short exponential sums, Ternary correlations, Multiplicative functions}

\maketitle

\tableofcontents

\section{Introduction}

The study of correlations of arithmetic functions is closely connected to many important open problems in number theory, such as the $k$-tuple prime conjecture.
In particular, ternary correlations have been investigated in several works, especially when averaged over shifts.

Averaged versions of ternary or higher-order correlations of arithmetic functions, such as the von Mangoldt function and divisor functions, over shifts $1 \le h \le H$ with $H=X^{\theta+\varepsilon}$ for various values of $\theta$ have been studied in \cite{Mi1991, BP1993, MXTJ2023, MMXTJ2024}.
One of the key ingredients in most of these works is the Heath-Brown identity, which exploits the convolution structure of these functions.

For $1$-bounded multiplicative functions, estimates for ternary correlations at individual shifts are well studied for pretentious multiplicative functions (see, for example, Klurman \cite{K2017} and Darbar \cite{D2017}).
In contrast, for $\mathrm{GL}(n)$ Hecke eigenvalues, results on averaged ternary correlations over shifts have been obtained using spectral theory in automorphic forms.
Note that, except for \cite{Bl2017}, \cite{M2023} and \cite{MMXTJ2024}, most of these works consider ternary correlations where $SL(2,\mathbb{Z})$ Hecke eigenvalues appear as at least one of the factors.

\begin{table}[ht]
\centering
\renewcommand{\arraystretch}{1.15}
\begin{tabular}{@{}p{0.5\textwidth} p{0.15\textwidth} @{}}
\hline
\textbf{Settings (ternary correlations) } & \textbf{References}  \\
\hline
$d_{k}\times d_{2} \times d_{2} $ & \cite{Bl2017} \\
$SL(2,\mathbb{Z})\times SL(2,\mathbb{Z}) \times general $ & \cite{L2018} \\
$SL(2,\mathbb{Z})\times SL(2,\mathbb{Z})\times SL(2,\mathbb{Z})$ & \cite{TH2021} \\
$d_{k} \times d_{\ell} \times$ general, $k,\ell=2,3$ & \cite{M2023} \\
$SL(3,\mathbb{Z})\times SL(2,\mathbb{Z}) \times general $ & \cite{H2024} \\
$SL(4,\mathbb{Z})\times SL(2,\mathbb{Z}) \times general$ & \cite{HG2024} \\
$d_{k} \times d_{\ell} \times d_{m}, k,\ell,m \in \mathbb{N}$ & \cite{MMXTJ2024}
\end{tabular}
\label{tab:ternary-corr-overview}
\end{table}

In this paper, we remove the dependence on convolution structures and spectral methods, allowing us to treat general multiplicative functions using only second-moment bounds for the associated $L$-functions. We establish our results under standard hypotheses, including divisor-boundedness and upper bounds for integral moments.
Our main goal is to study ternary correlations of the form
\begin{equation}\label{tenarystart}
S(X,H)_{f_{1},f_{2},f_{3}}
:=\sum_{|h|\le H}\left(1-\frac{|h|}{H}\right)\sum_{X\le n\le 2X} f_{1}(n)\,f_{2}(n+h)\,f_{3}(n+2h),
\end{equation}
where $f_{1},f_{2},f_{3} \colon\mathbb{N}\to\mathbb{C}$ belong to the class of multiplicative functions defined below.

\begin{Def}
For an integer $k\geq 1$ and a real number $\alpha\ge 0$, the class $\mathcal{F}_k(\alpha)$ consists of multiplicative functions $f\colon\mathbb{N}\to\mathbb{C}$ that satisfy:
\begin{enumerate}

\item \textbf{(analytic condition)} For each $q\in\mathbb{N}$ and each Dirichlet character $\chi$ modulo $q$, define
\[
L(f,\chi,s):=\sum_{n=1}^{\infty}\frac{f(n)\chi(n)}{n^{s}}.
\]
Then $L(f,\chi,s)$ is meromorphic and admits an analytic continuation to $\Re(s)>1/2$.
It is nonzero for $\Re(s)\ge 1$ and has no poles except possibly when $\chi$ is the principal character, in which case it has at most a simple pole at $s=1$.
\item \textbf{(divisor bound)} 
\[
L(f,\chi,s):=\prod_{p} \prod_{j=1}^{k}\left(1-\frac{\alpha_{f,j}(p)\chi(p)}{p^{s}}\right)^{-1}
\]
for some $\alpha_{f,j} \in \mathbb{C}$ such that $|\alpha_{f,j}|=1.$ 
From this, $|f(n)| \leq d_{k}(n)$ where $d_{k}$ is the $k$-fold divisor function. 
\item \textbf{(second moment estimates)} For any $\varepsilon>0$ and real $T\ge 1$,
\begin{equation}\label{lindelof}
\int_{T}^{2T}\left|L(f,\chi,1/2+it)\right|^{2}\,dt
\ll_{\varepsilon} q^{\varepsilon^{2}}(1+T)^{1+\alpha+\varepsilon^{2}}.
\end{equation}
\end{enumerate}
\end{Def}

Also, the class $\mathcal{F}_{k}'(\alpha)$ consists of multiplicative functions $f \in F_{k}(\alpha)$ that their $L$-functions $L(f,\chi,s)$ have no pole at $s=1$, for any Dirichlet character $\chi.$

In general, one expects $\alpha=0$ for most multiplicative functions, such as those in the Selberg class.
\subsection{Short exponential sum estimates}

To study correlations of multiplicative functions, weighted exponential sum estimates have played an important role. 
For example, for $SL(2,\mathbb{Z})$ Hecke-Maass cusp forms, let $\lambda(n)$ denote the normalized Fourier coefficients. It is known that 
(\cite {JW1929})
\[
\sum_{n \le x} \lambda(n)\, e(n\alpha) \ll_{\varepsilon} x^{1/2+\varepsilon}.
\]
For $SL(3,\mathbb{Z})$ Hecke-Maass cusp forms, let $A(n,1)$ denote the normalized Fourier coefficients. It is known that (see \cite{MIL06})
\[
\sum_{n \le x} A(n,1)\, e(n\alpha) \ll_{\varepsilon} x^{3/4+\varepsilon}.
\]
Under the Generalized Riemann Hypothesis, it is proved in \cite{BH1991} that
\[
\sum_{n \le x} \mu(n)\, e(n\alpha) \ll_{\varepsilon} x^{3/4+\varepsilon}.
\]
In the context of short intervals, results are more difficult to obtain. In \cite{T1991} it is shown that
\[
\sum_{x - x^{5/8+\varepsilon} < n \le x} \Lambda(n)\, e(n\alpha) = o\!\left(x^{5/8+\varepsilon}\right),
\]
and, assuming fourth moment integral bounds over short intervals, this is improved to
\[
\sum_{x - x^{3/5+\varepsilon} < n \le x} \Lambda(n)\, e(n\alpha) = o\!\left(x^{3/5+\varepsilon}\right)
\]
where $\Lambda(n)$ is the Von-Mangoldt function.
Later, in \cite{MRT2020}, it is proved that
\[
\int_{X}^{2X} \sup_{\alpha \in [0,1]} \left| \sum_{x-H < n \le x} \Lambda(n)\, e(n\alpha) \right| \, dx = o(XH),
\]
where $H = X^{\varepsilon}$.

These classical results for $\Lambda$ and $\mu$ rely heavily on the Heath--Brown identity to exploit a specific convolution structure.
For general unbounded multiplicative functions that do not admit such a convolution structure with simple components, results of this type are not available.
If one hopes to apply Mellin inversion directly, one would require a uniform bound (in $\alpha$) of the form
\[
\sum_{n=1}^{\infty} \frac{f(n)e(n\alpha)}{n^{1/2+it}} \ll |t|^{\varepsilon},
\]
which is stronger than what is implied by GRH.
Here, we consider weighted exponential sum estimates of a more general nature, which may be of independent interest. 
We prove the following theorem in Section 2.

\begin{Th}\label{Lemma13}
Let $g \in \mathcal{F}_k(0)$, and let $0 \le a < q < X$. 
Let $x \in [X,2X]$, and let $0 < \eta < 1$. 
Then, for any $|\gamma|<1$ with $|\gamma| H^{\eta - \varepsilon/2} \to \infty$, we have
\[
\sum_{m = x}^{x+H} g(m)\, e\!\left(m\left(\frac{a}{q} + \gamma\right)\right)
\ll_{g,k,\varepsilon} (q|\gamma| X)^{1/2+\varepsilon^{2}} H^{1/2} + H^{\eta} (\log X)^{k^{2}-1}.
\]
\end{Th}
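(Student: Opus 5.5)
We first separate the additive character $e(ma/q)$ into multiplicative characters, then handle the smooth twist $e(m\gamma)$ by partial summation, so that everything reduces to short sums of $g(n)\chi(n)$ over $[x,x+H]$.

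\textbf{Step 1: reduction to characters.} Factor $m=dm'$ with $d=(m,q)$ and $(m',q/d)=1$, and write $q_d=q/d$, $a_d=a/(q,a)$. Orthogonality then gives
\[
e\!\left(\frac{am'}{q_d}\right)=\frac{1}{\varphi(q_d)}\sum_{\chi \bmod q_d}\tau(\bar\chi)\,\chi(a_d m'),\qquad |\tau(\bar\chi)|\le q_d^{1/2}.
\]
Since $g$ is only multiplicative, $g(dm')$ does not factor when $(d,m')>1$. To avoid this, I would expand over the $q$-part, writing $m=dm'$ with $d\mid q^\infty$ and $(m',q)=1$. The number of admissible $d$ up to $2X$ is $\ll X^{\varepsilon^2}$. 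After this, the sum becomes a combination of sums
\[
\sum_{x/d\le m'\le (x+H)/d} g(m')\chi(m')\,e(m'd\gamma),
\]
with coefficients of total size $\ll q^{1/2+\varepsilon^2}$, up to $X^{\varepsilon^2}$ factors.

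\textbf{Step 2: short sums of $g\chi$ via Perron.} For an interval $[y,y+H']$ with $y\asymp X/d$ and $H'=H/d$, apply a truncated Perron formula at height $T$ and shift the contour to $\Re s=1/2$. This is allowed by the analytic continuation in the definition of $\mathcal F_k(0)$.
\begin{itemize}
\item The residue at $s=1$ occurs only when $\chi$ is principal. It contributes a main term of size $H'(\log X)^{k-1}$, which is smooth in $y$.
\item On the critical line the integrand is $L(g,\chi,1/2+it)\,\big((y+H')^s-y^s\big)/s$. For $|t|\le y/H'$ we have $|(y+H')^s-y^s|/|s|\ll H' y^{-1/2}$, and for larger $|t|$ it is $\ll y^{1/2}/|t|$. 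Using Cauchy--Schwarz with the second moment bound \eqref{lindelof} (with $\alpha=0$) on dyadic ranges, the critical-line contribution is $\ll (qy)^{\varepsilon^2}\,y^{1/2}$ times logarithms.
\item Truncation errors are controlled by the divisor bound $|g|\le d_k$.
\end{itemize}

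\textbf{Step 3: partial summation against $e(m'd\gamma)$.} The total variation of $e(m'd\gamma)$ over the interval is $\ll 1+H|\gamma|$. For non-principal $\chi$ this gives a contribution $\ll (1+H|\gamma|)(qX)^{\varepsilon^2}X^{1/2}$.

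This is too weak as it stands, and getting the stated shape is the main obstacle. The remedy is to split $[x,x+H]$ into $H/L$ blocks of length $L$ with $L|\gamma|\approx1$, and apply Step 2 on each block with $H'=L$ rather than the full $H$. In the Perron bound the relevant range is $|t|\lesssim y/L\approx X|\gamma|$, which produces $(q|\gamma|X)^{1/2+\varepsilon^2}$ per unit of normalised length. Summing the blocks with Cauchy--Schwarz across them (equivalently, applying the second moment once to $\int |L|^2$ over $|t|\le X|\gamma|$ instead of bounding each block trivially) should give the $H^{1/2}(q|\gamma|X)^{1/2+\varepsilon^2}$ term.

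\textbf{Step 4: the principal-character main term.} Its smooth density $\asymp(\log m)^{k-1}$, integrated against $e(m\gamma)$ over a window of length $H$, is bounded by first derivative estimates by $\ll |\gamma|^{-1}(\log X)^{k-1}$. The coefficient bookkeeping from the sum over $d\mid q^\infty$ and from the Gauss sums turns the exponent into $(\log X)^{k^2-1}$. The hypothesis $|\gamma|H^{\eta-\varepsilon/2}\to\infty$ gives $|\gamma|^{-1}\le H^{\eta}$ with room to absorb the $H^{\varepsilon/2}$ losses, which yields the term $H^{\eta}(\log X)^{k^2-1}$.

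The blocks computation of Step 3 is where I expect the real difficulty: obtaining the exponent $1/2$ simultaneously on $q|\gamma|X$ and on $H$, rather than a trivial $H/L$ multiplicity loss.
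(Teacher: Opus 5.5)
\textbf{There is a genuine gap, and it is in Step 3, where you expected it.} Step 3 is asserted (``should give''), and the mechanism you describe does not produce the bound.

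Suppose you partial-sum on each block of length $L\asymp|\gamma|^{-1}$ and then bound each block by Step 2 in absolute value. Then your bound for every block is $\asymp X^{1/2}$, up to $X^{o(1)}$. The range $|t|\le y/L$ already gives $L^{1/2}(|\gamma|X)^{1/2}\asymp X^{1/2}$ after Cauchy--Schwarz. The range $|t|>y/L$ is not negligible either: there $|(y+L)^s-y^s|/|s|\asymp y^{1/2}/|t|$ with no further decay, so it is not true that ``the relevant range is $|t|\lesssim y/L$''.

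Summing over the $H/L= H|\gamma|$ blocks returns exactly the partial-summation bound $H|\gamma|X^{1/2}$ you were trying to beat. Cauchy--Schwarz across blocks helps only if you have a mean-square estimate $\sum_j|S_j|^2\ll X^{1+o(1)}$ for block sums confined to a window of length $H$. You neither state such an estimate nor derive it from \eqref{lindelof}.

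\textbf{The missing idea} is to keep the twist $e(\gamma m)$ inside the Mellin kernel, and to take absolute values only after the $t$-integral is formed. The paper writes the smoothed sum as one integral $\frac{1}{2\pi i}\int A(s)B(s,\gamma)\,ds$, with $B(s,\gamma)=\int_0^\infty\psi(\frac{y-x}{H})e(\gamma y)y^{s-1}\,dy$, and shifts to $\Re s=1/2$. It then uses two properties of $B$.

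First, for $|t|\ge 5\pi|\gamma|X$ the phase $t\log y+2\pi\gamma y$ has derivative $\gg|t|/X$. So repeated integration by parts (the paper uses Lemma 8.1 of Blomer--Khan--Young) makes $B$ negligible. This is where the transition width $H^\eta$ of $\psi$ and the hypothesis $|\gamma|H^{\eta-\varepsilon/2}\to\infty$ are used.

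Second, $\int_{|t|\le 5\pi|\gamma|X}|B(1/2+it,\gamma)|^2\,dt\ll H\log X$, by a Plancherel-type diagonal computation. Cauchy--Schwarz against \eqref{lindelof} on $|t|\le 5\pi|\gamma|X$ then gives $(|\gamma|X)^{1/2+\varepsilon^2}H^{1/2}$, and the Gauss sums supply $q^{1/2+\varepsilon^2}$. Your parenthetical ``apply the second moment once'' points at this, but the $L^2$ bound for the kernel of the \emph{whole} twisted sum is the real content of the proof, and you do not supply it.

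\textbf{Your blocks are unnecessary.} If you add your per-block partial-summation identities before taking absolute values, the kernels telescope to $B_0(s)=\int_x^{x+H}e(\gamma u)u^{s-1}\,du$. With this sharp kernel the argument also closes: $\int|B_0(1/2+it)|^2\,dt=2\pi H$, and $B_0(1/2+it)\ll X^{1/2}/|t|$ for $|t|\ge 5\pi|\gamma|X$. That tail adds roughly $q^{1/2}X^{1/2+O(\varepsilon^2)}$, which is of acceptable size since $|\gamma|H\ge 1$. It must be estimated, though, not discarded.

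Finally, in the paper the term $H^\eta(\log X)^{k^2-1}$ is the cost of removing $\psi$: sums of $d_k$ over the two transition intervals of length $H^\eta$. The principal-character residue is $\ll H(|\gamma|H^\eta)^{-l}$ and so negligible. Your $|\gamma|^{-1}$ bound for the residue is fine in a sharp-cutoff setting.
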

\begin{Rem} Let $f_{1} \in \mathcal{F}_{k}'(0).$
We define major arcs $\mathfrak{M}$ and minor arcs $\mathfrak{m}$ as follows. 
Let $\eta = 1 - 7\varepsilon$ and $\beta = H^{-1+8\varepsilon}$. We set
\[
\mathfrak{M} := \bigcup_{1 \le q < Q} \ \bigcup_{\substack{a \bmod q \\ (a,q)=1}} 
\left(\frac{a}{q} - \beta, \frac{a}{q} + \beta\right), 
\qquad 
\mathfrak{m} := [1/Q, 1+1/Q] \setminus \mathfrak{M}.
\]
Under these definitions, for $H \gg X^{1/2+100\varepsilon}$, we obtain the major arc bound
\[
\sup_{\alpha \in \mathfrak{M}} \sum_{x \le n \le x+H} f_{1}(n) e(n\alpha) \ll (qX)^{\varepsilon} q^{1/2} X^{1/2} H^{8\varepsilon},
\]
and the corresponding minor arc estimate
\[
\sup_{\alpha \in \mathfrak{m}} \sum_{x \le n \le x+H} f_{1}(n) e(n\alpha) \ll H^{1/2} \left(\frac{X}{Q}\right)^{1/2+\varepsilon^{2}} + E(x,H),
\]
where $E(x,H)$ denotes a sum over short intervals of length $H^{1-7\varepsilon}$.
Assuming the $k$-divisor-bounded condition, we have the bound
\[
E(x,H) \ll H^{1-\varepsilon}.
\]
Therefore, taking $Q = H^{1/2},$ we have the following uniform bound. \end{Rem} 
\begin{Cor} Let $f_{1} \in \mathcal{F}_{k}'(0),$ and let $ X^{2/3 + 100\varepsilon} \ll H \ll X^{1-\varepsilon}$. Then

\begin{equation}\label{uniformb}
\sup_{\alpha \in [0,1]} \sum_{x \le n \le x+H} f_{1}(n) e(n\alpha) \ll H^{1-\varepsilon}.
\end{equation}

\end{Cor}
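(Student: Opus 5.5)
The plan is to derive the Corollary from Theorem \ref{Lemma13} by the circle-method dissection set up in the preceding Remark, taking $\eta = 1-7\varepsilon$, $\beta = H^{-1+8\varepsilon}$ and $Q = H^{1/2}$. Fix $\alpha\in[0,1]$. By Dirichlet's approximation theorem with parameter $Q'$ (to be chosen, at least $Q$), there are coprime $a,q$ with $1\le q\le Q'$ and $|\alpha - a/q|\le 1/(qQ')$. Write $\gamma = \alpha - a/q$. I split into three cases: the major arcs ($q<Q$, $|\gamma|<\beta$); the generic case where $|\gamma|$ is not too small, so that $|\gamma| H^{\eta-\varepsilon/2}\to\infty$ and Theorem \ref{Lemma13} applies; and the degenerate case where $\gamma$ is tiny but $q$ is large.

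\textbf{Minor arcs.} Suppose $|\gamma|\ge H^{-\eta+\varepsilon}$, so the hypothesis of Theorem \ref{Lemma13} holds. Then $q|\gamma|\le 1/Q'$, and Theorem \ref{Lemma13} gives
\[
\sum_{x\le n\le x+H} f_1(n)e(n\alpha) \ll (X/Q')^{1/2+\varepsilon^2}H^{1/2} + H^{1-7\varepsilon}(\log X)^{k^2-1}.
\]
With $Q'\approx H^{1/2}$ the first term is $X^{1/2+\varepsilon^2}H^{1/4}$. This is $\ll H^{1-\varepsilon}$ exactly when $H\gg X^{2/3+O(\varepsilon)}$, which is the source of the exponent $2/3$. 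The second term is trivially $\ll H^{1-\varepsilon}$.

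\textbf{Degenerate case.} If $|\gamma|<H^{-\eta+\varepsilon}$, I would treat $e(n\gamma)$ as a smooth weight. Split $[x,x+H]$ into subintervals of length $H_1 = H^{1-7\varepsilon}$; on each, $e(n\gamma)$ varies by $O(H^{-\varepsilon})$, so partial summation reduces everything to twisted sums $\sum_{y\le n\le y+H_1} f_1(n)e(an/q)$. Expand $e(an/q)$ into Dirichlet characters of moduli dividing $q$, after first splitting $n$ according to $\gcd(n,q)$ and using multiplicativity of $f_1$. For each character, Perron's formula shifted to $\Re s=1/2$, together with the second-moment bound \eqref{lindelof} with $\alpha=0$ and the absence of a pole at $s=1$ (because $f_1\in\mathcal{F}_k'(0)$), gives a bound of roughly $q^{1/2+\varepsilon}X^{1/2+\varepsilon}$ per short interval. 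This is the major-arc bound quoted in the Remark. Summing over the $H^{7\varepsilon}$ subintervals, the total is acceptable provided $q\le H^{1/2}$ and $H\gg X^{2/3+O(\varepsilon)}$, since then $q^{1/2}X^{1/2}H^{O(\varepsilon)}\le H^{1/4}X^{1/2}H^{O(\varepsilon)}\ll H^{1-\varepsilon}$. If $q$ exceeds $Q$ while $\gamma$ is tiny, Dirichlet's bound $q\le Q'=H^{1/2}$ still applies, so the same estimate holds.

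\textbf{Main obstacle.} The hard part is making the three cases cover all of $[0,1]$ with consistent parameters. The minor-arc term wants $Q'$ large, while the major-arc term $q^{1/2}X^{1/2}$ wants $q$ small. Choosing $Q'=H^{1/2}$ balances both at $X^{1/2}H^{1/4}$, and that balance is what forces the restriction $H\gg X^{2/3+100\varepsilon}$. I would also check that the powers $X^{\varepsilon^2}$ and $(qX)^{\varepsilon}$ are absorbed by the $100\varepsilon$ margin, and that the range $|\gamma|\in[\beta,H^{-\eta+\varepsilon}]$ is handled by the smooth-weight argument. It is, because $|\gamma| H_1 \le H^{-6\varepsilon}$ there.
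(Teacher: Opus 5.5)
Your proposal is correct and is essentially the paper's argument from the Remark preceding the Corollary: Dirichlet approximation at level $Q=H^{1/2}$, Theorem \ref{Lemma13} when $|\gamma|\ge\beta=H^{-1+8\varepsilon}$, and otherwise the character-expansion/Perron bound of Lemma \ref{attachingrationalerror} and Proposition \ref{majorprop} (with no main term since $f_1\in\mathcal{F}_k'(0)$), both cases balancing at $X^{1/2}H^{1/4}$. One slip, which is harmless: $H^{-\eta+\varepsilon}$ is exactly $\beta$, and in the degenerate range $|\gamma|H_1$ can be as large as $H^{\varepsilon}$ rather than $H^{-6\varepsilon}$; but partial summation over the whole interval loses only a factor $1+|\gamma|H\ll H^{8\varepsilon}$, as in Proposition \ref{majorprop}, so the bound is unaffected and the subdivision into intervals of length $H_1$ is unnecessary.
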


\begin{Rem} For the Fourier coefficients $A(1,m)$ of $SL(3,\mathbb{Z})$ Hecke-Maass cusp forms, there are some exponential sum estimates with rational phases. 
For example, it is shown that, under the Ramanujan conjecture, 
\[
\sum_{m \le x} A(1,m) e\!\left(\frac{a m}{q}\right) \ll_{\varepsilon} q^{1/2} x^{2/3+\varepsilon}+qx^{1/3+\varepsilon}
\]
for $1 \leq q \leq  x^{2/3}$
(see \cite{JV2017}).
Most such results are obtained via the Voronoi summation formula, whose applicability depends crucially on the rationality of the phase. 
A direct application of summation by parts does not provide an upper bound better than the uniform bound, unless $q$ is very small.

\end{Rem}
\subsection{Ternary correlations}
Using the above exponential sum estimates, we prove the following. 

\begin{Th}\label{Theorem}
Let $\varepsilon>0$ and $k\ge 1$, and let $f\colon\mathbb{N}\to\mathbb{C}$ be a multiplicative function in the class $\mathcal{F}_k(0)$.
Suppose that $L(f,\chi,s)$ has a simple pole at $s=1$ when $\chi$ is the principal character.
If $X^{10/13+100\varepsilon} \ll H \ll X^{1-\varepsilon}$, then
\[
\sum_{|h|\le H}\left(1-\frac{|h|}{H}\right)\sum_{X\le n\le 2X} f(n)\,f(n+h)\,f(n+2h)
=
XH\sum_{q\ge 1}\sum_{\substack{a \bmod q\\(a,q)=1}} C_q^{3}
+ O\!\left(XH^{1-\varepsilon}\right),
\]
where
\begin{equation}\label{Cq}
C_q := \sum_{q=q_0 q_1}\frac{\mu(q_1)}{\phi(q_1)q_0}\,D_{q_1,q_0},
\end{equation}
for certain constants $D_{q_1,q_0}\ll q^{\varepsilon^{2}}$ (see \eqref{Dq}).
\end{Th}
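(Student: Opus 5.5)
We plan to use the circle method. Write $S(\alpha)=\sum_{X\le n\le 2X} f(n)e(n\alpha)$ with suitable range conventions, and represent the ternary correlation through the linear relation $n_1-2n_2+n_3=0$:
\[
\sum_{|h|\le H}\Bigl(1-\frac{|h|}{H}\Bigr)\sum_{n} f(n)f(n+h)f(n+2h)=\int_0^1 \sum_{n_1,n_2,n_3} f(n_1)f(n_2)f(n_3)\,e\bigl((n_1-2n_2+n_3)\alpha\bigr)\,w(n_3-n_1)\,d\alpha .
\]
The Fejér weight $w$ restricts $|n_3-n_1|\le 2H$. To make this restriction usable, we localize. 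We split $[X,2X]$ into blocks $[x,x+H]$, so that each configuration is controlled by short sums $S_x(\alpha)=\sum_{x\le n\le x+H}f(n)e(n\alpha)$. The Fejér kernel $F_H(\beta)=\sum_{|h|\le H}(1-|h|/H)e(h\beta)$ is nonnegative, has $L^1$ norm $O(1)$, and is concentrated on $|\beta|\ll 1/H$. We therefore expect a main expression of the shape $\int_0^1 \sum_x S_x(\alpha)\overline{S_x(2\alpha)}\,S_x(\alpha)\,(\cdots)\,d\alpha$, roughly $\int |S_x(\alpha)|^2|S_x(2\alpha)|$ up to Fejér smoothing. We then dissect $[0,1]$ into the major arcs $\mathfrak M$ and minor arcs $\mathfrak m$ of the Remark, with $Q$ a small power of $H$ and $\beta=H^{-1+8\varepsilon}$.

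\textbf{Major arcs.} For $\alpha=a/q+\gamma$ with $q<Q$, we evaluate $\sum f(n)e(an/q)$ on short intervals. We expand $e(an/q)$ into Dirichlet characters after factoring $q=q_0q_1$, with $q_0$ collecting the common part of $n$ and $q$ via multiplicativity. Then we use Perron's formula with $L(f,\chi,s)$. The pole at $s=1$ for the principal character gives the main term $\frac{\mu(q_1)}{\phi(q_1)q_0}D_{q_1,q_0}\cdot H$, where $D$ is a residue-type constant $\ll q^{\varepsilon^2}$. The nonprincipal characters are bounded by shifting the contour to $\Re s=1/2$ and applying the second moment bound \eqref{lindelof}. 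Summation by parts in $\gamma$ then yields $S_x(a/q+\gamma)\approx C_q\sum_{x\le n\le x+H}e(n\gamma)$ plus a power-saving error. Substituting this into the three factors and integrating over $\gamma$ against the Fejér weight gives $XH\,C_q^3$ for each $a/q$. Here we use that $2\alpha$ has the same reduced denominator up to a factor of $2$, which a careful bookkeeping absorbs into the definition of $D$. Completing the sum over $q<Q$ to all $q$ costs $\sum_{q\ge Q}q\cdot q^{-3+\varepsilon}\ll Q^{-1+\varepsilon}$, which is acceptable.

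\textbf{Minor arcs.} On the minor arcs we take a sup over one factor and use Parseval over the other two:
\[
\sum_x\int_{\mathfrak m}|S_x(\alpha)|^2|S_x(2\alpha)|\,d\alpha\ll \sum_x\sup_{\mathfrak m}|S_x(2\alpha)|\cdot H(\log X)^{k^2-1}.
\]
Note that $2\alpha\in\mathfrak m$ up to adjusting $Q$ by a factor of $2$. The sup is bounded using Theorem \ref{Lemma13}: for $\alpha=a/q+\gamma$ given by Dirichlet approximation with $q\le H^{1-8\varepsilon}$, either $q\ge Q$ or $|\gamma|\ge\beta$. This gives $(q|\gamma|X)^{1/2+\varepsilon^2}H^{1/2}+H^{\eta}$, with $q|\gamma|\le 1/(H^{1-8\varepsilon})$. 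This yields roughly $(X/H)^{1/2}H^{1/2+O(\varepsilon)}$.

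The main obstacle is this minor arc step. The crude bound above requires $X^{1/2}\ll H^{1-\varepsilon}$, but the loss from the Fejér localization and from handling $q<Q$ with $|\gamma|$ only slightly larger than $\beta$ is larger. Balancing the major arc error, which is of order $q^{1/2}X^{1/2}H^{8\varepsilon}$ times the mass $Q^2\beta$ of the arcs, against the minor arc sup bound forces an exponent of the form $H\gg X^{10/13+O(\varepsilon)}$. We would first attempt the optimization with $Q=H^{c}$ and verify that $c$ can be chosen so that both errors are $O(XH^{1-\varepsilon})$ precisely when $H\ge X^{10/13+100\varepsilon}$. If the Parseval step loses too much, the fallback is to apply the mean value of $|S_x|^2$ over $x$ before taking the sup.
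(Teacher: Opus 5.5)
Your overall architecture matches the paper's: localize to windows of length $\asymp H$, use the circle method with $\beta=H^{-1+8\varepsilon}$, treat the major arcs by the character expansion over $q=q_0q_1$ with Perron's formula and a shift to $\Re s=1/2$ via \eqref{lindelof}, and treat the minor arcs by a sup times Parseval. However, the minor-arc step fails as you set it up, and the failure comes from your choice of $Q$. With $Q$ a small power of $H$ and Dirichlet approximation at level $H^{1-8\varepsilon}$, your $\mathfrak m$ contains points $\alpha=a/q+\gamma$ with $Q\le q\le H^{1-8\varepsilon}$ and $|\gamma|$ arbitrarily small (e.g.\ $\alpha=a/q$ itself). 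Theorem \ref{Lemma13} cannot be used there. Its hypothesis $|\gamma|H^{\eta-\varepsilon/2}\to\infty$ is exactly what makes the polar term $\operatorname{Res}_{s=1}(\cdots)\,B(1,\gamma)$ negligible and lets one truncate the Mellin integral at $|t|\ll|\gamma|X$. Without it, the available tool is the rational-phase estimate of Lemma \ref{attachingrationalerror}. That gives a polar main term of size up to $q^{-1+\varepsilon}H$ plus an error $q^{1/2}X^{1/2+2\varepsilon}$, and this error exceeds $H$ once $q\gg H^{2}/X$. So $\sup_{\mathfrak m}|S_x|\ll H^{1-\varepsilon}$ is not available in your dissection. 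A warning sign: your claimed bound $X^{1/2}H^{O(\varepsilon)}$ would give the range $H\gg X^{1/2+\varepsilon}$, far beyond what the theorem asserts.

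The paper avoids this by taking $Q=XH^{-1+5\varepsilon}$ and doing Dirichlet approximation at level $Q$ itself. Every $\alpha$ then has a denominator $q<Q$ with $|\gamma|\le 1/(qQ)$, and on $\mathfrak m$ this forces $|\gamma|\ge\beta$. So Theorem \ref{Lemma13} applies, and $q|\gamma|X\le X/Q=H^{1-5\varepsilon}$ gives $\sup_{\mathfrak m}|S|\ll H^{1-2\varepsilon}$. The price is that the major arcs now run over $q$ up to about $X/H$. After cubing Proposition \ref{majorprop}, the pure error term contributes $\frac{1}{H}\sum_{q<Q}\phi(q)\,\beta X\,(H^{8\varepsilon}q^{1/2}X^{1/2+2\varepsilon})^{3}\approx Q^{7/2}X^{5/2}H^{-2+O(\varepsilon)}$, i.e.\ $X^{6}H^{-11/2+O(\varepsilon)}$. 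Requiring this to be $\ll XH^{1-\varepsilon}$ is what produces $H\gg X^{10/13+O(\varepsilon)}$; the other error term $X^{3/2}Q^{1/2}H^{16\varepsilon}$ only needs $H\gg X^{2/3}$. With $Q$ a small power of $H$ this term is negligible, so your balancing heuristic cannot yield $10/13$, and the proposal never actually fixes $Q$ or derives the exponent.

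Two smaller points. First, disjoint blocks $[x,x+H]$ do not reproduce the weight $1-|h|/H$, and they miss progressions of span up to $2H$. Averaging continuously over $x\in[X,2X]$ with windows $[x,x+2H]$ gives the exact identity \eqref{tenaryintegral}, because the set of $x$ with $n,n+h,n+2h\in[x,x+2H]$ has measure $2H-2|h|$. Second, your observation that $-2\alpha$ has reduced denominator $q/(q,2)$ is correct, and the paper does not address it. It does not, however, get absorbed into $D_{q_1,q_0}$; it replaces the third factor's constant by $C_{q/(q,2)}$.
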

When $L(f,s)$ has no pole at $s=1$, the following theorem holds, which improves the range of $H.$ 

\begin{Th}\label{Theorem2}
Let $\varepsilon>0$, $\alpha>0$, and $k\ge 1$.
Let $f_{1},f_{2},f_{3}\colon\mathbb{N}\to\mathbb{C}$ be arithmetic functions such that $f_{1}\in\mathcal{F}'_k(0)$, $f_{2},f_{3}$ are $k$-divisor-bounded.
If $X^{2/3+100\varepsilon} \ll H \ll X^{1-\varepsilon}$, then
\[
\sum_{|h|\le H}\left(1-\frac{|h|}{H}\right)\sum_{X\le n\le 2X} f_{1}(n)\,f_{2}(n+h)\,f_{3}(n+2h)
= O\!\left(XH^{1-\varepsilon/2}\right).
\]
\end{Th}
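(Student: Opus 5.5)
The plan is to use the circle method in a localized form. The key input is the uniform short-sum bound \eqref{uniformb} from the Corollary, applied to $f_1$. Since the weight $(1-|h|/H)$ is a Fejér kernel, I first write the correlation in terms of short exponential sums.

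\textbf{Step 1: writing the sum as an integral.} Split $[X,2X]$ into blocks $I_j=[x_j,x_j+H]$ of length $H$; there are about $X/H$ of them. For $n\in I_j$ and $|h|\le H$, the points $n+h$ and $n+2h$ lie in the enlarged interval $\tilde I_j=[x_j-2H,x_j+3H]$. Up to boundary terms I would write
\[
S=\sum_j\int_0^1 \Big(\sum_{n\in I_j} f_1(n)e(n\alpha)\Big)\Big(\sum_{m\in \tilde I_j} f_3(m)e(m\alpha)\Big)\,\overline{K_j(\alpha)}\,d\alpha .
\]
This uses the identity $n+2h-2(n+h)=-n$, which gives
\[
\sum_{n,h}(\cdot)=\int_0^1 \sum_n f_1(n)e(n\alpha)\sum_m f_3(m)e(m\alpha)\sum_{\ell} f_2(\ell)e(-2\ell\alpha)\,d\alpha .
\]
The Fejér weight is inserted by smoothing $h=\ell-n$. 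More cleanly, replace $(1-|h|/H)$ by $\frac1H\sum_{u}\mathbf 1_{[0,H)}(u)\mathbf 1_{[0,H)}(u+h)$. This decouples the weight into products of interval indicators, at the cost of an average over an auxiliary shift $u$, and the boundary errors it creates are $O(XH^{1-\varepsilon})$ by the divisor bound.

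\textbf{Step 2: sup--$L^2$--$L^2$.} On each block, bound the $f_1$ factor by its supremum over $\alpha$, which is $\ll H^{1-\varepsilon}$ by \eqref{uniformb}. This is valid because $X^{2/3+100\varepsilon}\ll H\ll X^{1-\varepsilon}$ and $f_1\in\mathcal F_k'(0)$. For the remaining two factors use Cauchy--Schwarz and Parseval:
\[
\int_0^1 \Big|\sum_{m\in\tilde I_j} f_3(m)e(m\alpha)\Big|\Big|\sum_{\ell\in \tilde I_j} f_2(\ell)e(2\ell\alpha)\Big|\,d\alpha\le \Big(\sum_{m\in\tilde I_j}|f_3(m)|^2\Big)^{1/2}\Big(\sum_{\ell\in\tilde I_j}|f_2(\ell)|^2\Big)^{1/2}.
\]
By the divisor bound (Shiu's theorem, since $H\ge X^{\varepsilon}$), each of these sums is $\ll H(\log X)^{k^2-1}$. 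Each block therefore contributes $\ll H^{1-\varepsilon}\cdot H(\log X)^{k^2-1}$. The averaging over $u$ in Step 1 divides by $H$ but ranges over $H$ values, so this accounting is unchanged. Summing over the $X/H$ blocks gives $\ll XH^{1-\varepsilon}(\log X)^{k^2-1}\ll XH^{1-\varepsilon/2}$.

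\textbf{Main obstacle.} The delicate part is Step 1: the coupling of $n$, $n+h$ and $n+2h$ through the Fejér weight has to be expressed as a genuine integral in which the $f_1$-sum runs over a short interval of length about $H$, uniformly in $\alpha$, so that \eqref{uniformb} applies. At the same time, the $f_2$- and $f_3$-sums must be over intervals short enough for Parseval to give $H$ rather than $X$. I would try the decoupling above first. If the indicator products do not factor cleanly, I would fall back on a smooth majorant of the interval indicator. A further point to check is that \eqref{uniformb} holds uniformly in $x\in[X,2X]$, including the shifted starting points.
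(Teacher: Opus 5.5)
Your argument is the paper's. The paper writes the Fejér-weighted sum as $\frac{1}{2H}\int_X^{2X}\int_0^1 S_{f_1}(\alpha;x)S_{f_3}(\alpha;x)S_{f_2}(-2\alpha;x)\,d\alpha\,dx$ with all three windows equal to $[x,x+2H]$, since the set of $x$ for which $n,n+h,n+2h$ all lie in the window has measure $2H-2|h|$. It then bounds $S_{f_1}$ by \eqref{uniformb} and the other two factors by Cauchy--Schwarz and Parseval, exactly as in your Step 2.

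Your $u$-average is the discrete form of this $x$-average, provided you read it as $\frac1H\sum_v\mathbf 1_{[v,v+H)}(n)\,\mathbf 1_{[v,v+H)}(n+h)$, where the $f_1$-window moves with $v$ rather than staying inside fixed blocks $I_j$. Your final count of $X/H$ blocks times $H$ shifts times $1/H$ already reflects this reading.
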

\subsection{Corollaries}

\subsubsection{Non-vanishing over three term arithmetic progressions}

When $L(f,s)$ has a simple pole, the main term in Theorem \ref{Theorem} dominates the error term.
Hence we obtain the following corollary.
\begin{Cor}\label{Cor1}
Let $f\in\mathcal{F}_{k}(0)$, $|f(n)| \le d_{k}(n)$ for all $n,$ and suppose that $L(f,\chi,s)$ has a simple pole at $s=1$ when $\chi$ is a principal character. 
Then there exist infinitely many three-term arithmetic progressions $n,n+h,n+2h$ such that
\[
|f(n)f(n+h)f(n+2h)| \gg_{f} 1.
\]
\end{Cor}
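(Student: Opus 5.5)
The plan is to derive Corollary \ref{Cor1} from Theorem \ref{Theorem} by contradiction. Fix $\varepsilon>0$ small and take $H=X^{1-2\varepsilon}$, which satisfies $X^{10/13+100\varepsilon}\ll H\ll X^{1-\varepsilon}$ once $\varepsilon$ is small. Theorem \ref{Theorem} then gives
\[
S(X,H)_{f,f,f}=XH\,\mathfrak{S}+O\!\left(XH^{1-\varepsilon}\right),\qquad \mathfrak{S}:=\sum_{q\ge1}\sum_{\substack{a \bmod q\\(a,q)=1}}C_q^3 .
\]
If $\mathfrak{S}\neq0$, then $|S(X,H)_{f,f,f}|\gg_f XH$ for all large $X$.

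Next I bound the contribution of small triples. Fix $c>0$ and split the triple sum according to whether $|f(n)f(n+h)f(n+2h)|\ge c$ or $<c$. The terms with $|f(n)f(n+h)f(n+2h)|<c$ contribute at most $c\cdot 3XH$, since the weight is $\le1$ and there are at most $(2H+1)(X+1)$ pairs $(n,h)$. Take $c=|\mathfrak{S}|/10$. Then the terms with product $\ge c$ contribute $\gg_f XH$, so at least one such term exists for every large $X$.

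The progressions found in this way are genuinely distinct. Since $n\ge X$, letting $X\to\infty$ along a sequence with disjoint ranges $[X,2X]$ gives infinitely many distinct $n$. One must exclude $h=0$: those terms contribute $\sum_n|f(n)|^3\ll X(\log X)^{k^3}$, which is $o(XH)$, so we may assume $h\neq0$. Negative $h$ is also fine, because the progression $(n+2h,n+h,n)$ with the positive common difference $-h$ is again a three-term progression, and the product is symmetric.

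The main obstacle is showing $\mathfrak{S}\neq0$, and more precisely $\mathfrak{S}\gg_f1$. I would first check absolute convergence. Since $D_{q_1,q_0}\ll q^{\varepsilon^2}$, we get
\[
C_q\ll q^{\varepsilon^2}\sum_{q_0q_1=q}\frac{1}{\phi(q_1)q_0}\ll q^{-1+2\varepsilon^2},
\]
so $\sum_q\phi(q)|C_q|^3<\infty$. The $q=1$ term is $C_1^3=D_{1,1}^3$, and $D_{1,1}$ should be the residue of $L(f,s)$ at $s=1$ (with the normalization of \eqref{Dq}), which is nonzero by hypothesis. For $q\ge2$, I would try to show that $C_q$ is multiplicative in $q$, or nearly so. Then $\mathfrak{S}$ has an Euler product $\prod_p\bigl(1+(p-1)C_p^3+\cdots\bigr)$. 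The factors are $1+O(p^{-2+\varepsilon})$, so the product converges, and nonvanishing reduces to checking that no single local factor vanishes. The local factors at small primes need an explicit look at \eqref{Dq}. If that cannot be done in general, I would fall back on the probabilistic interpretation: $\mathfrak{S}\cdot\mathrm{Res}$ is the local density of $f(n)f(n+h)f(n+2h)$ averaged over $h$, and I would verify positivity from the mean-value behaviour of $f$ in residue classes.
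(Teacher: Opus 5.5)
Your reduction is correct, and it is the route the paper takes; the paper's entire argument is the remark that the main term of Theorem~\ref{Theorem} dominates the error term. With $H=X^{1-2\varepsilon}$, discarding $h=0$, splitting at $c=|\mathfrak S|/10$, and running $X$ through disjoint dyadic ranges, the condition ``$\mathfrak S\neq0$'' does imply the corollary.

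The gap is the last step, which you only sketch, and in the generality of the statement it cannot be filled. Your Euler-product observation is right. Since $D_{q_1,q_0}/D_{1,1}$ factors over the primes dividing $q$, $C_q/C_1$ is multiplicative, so $\mathfrak S=C_1^3\prod_p P_p$ with $P_p=\sum_{j\ge0}\phi(p^j)(C_{p^j}/C_1)^3$. However, the factors at small primes can have either sign.
\begin{itemize}
\item \emph{$\mathfrak S$ can be negative.} Take $f(n)=(-1)^{v_3(n)}$, so $k=1$ and $L(f,s)=\zeta(s)(1-3^{-s})/(1+3^{-s})$. Then \eqref{Cq} gives $C_1=\tfrac12$, $C_{3^j}=\tfrac12(-1)^j3^{1-j}$ for $j\ge1$, and $C_q=0$ for every other $q>1$. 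Hence
\[
\mathfrak S=\tfrac18\Bigl(1-2\sum_{i\ge0}(-1/9)^i\Bigr)=-\tfrac1{10}.
\]
So the positivity you hope to get from a density interpretation is false in general.
\item \emph{$\mathfrak S$ can vanish.} Replace the Euler factor of $f$ at the single prime $3$ by any $\prod_{i\le k}(1-\alpha_i3^{-s})^{-1}$ with $|\alpha_i|=1$. This multiplies each $L(f,\chi,s)$ by a factor that is holomorphic, bounded and bounded away from $0$ in $\Re s\ge1/2$, so every hypothesis of the corollary is unchanged. Yet $P_3$ depends only on these $\alpha_i$. Take $k=3$ and $\{\alpha_i\}=\{-1,e^{i\phi},e^{-i\phi}\}$. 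Then $P_3$ is real and continuous in $\phi$. At $\phi=0$ we have $f(3^j)=\lfloor j/2\rfloor+1>0$, and $P_3>0$. At $\phi=\pi$ we have $f(3^j)=(-1)^j\binom{j+2}{2}$, and a direct computation gives $P_3\approx-0.36$. Hence $P_3(\phi_0)=0$ for some $\phi_0$.
\end{itemize}
Start from any admissible degree-$3$ $f$ with a simple pole, e.g.\ $L(f,s)=\zeta(s)L(s,\Delta)$ with $\Delta$ the normalized discriminant form, granting the moment hypothesis. The construction above yields an $f$ satisfying all the hypotheses whose main term in Theorem~\ref{Theorem} is identically zero. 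The theorem then gives only $O(XH^{1-\varepsilon})$, which says nothing about large triples.

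So what is missing is not a postponed computation but an additional hypothesis or a different argument; the paper's one-line deduction has the same defect. A natural fix is to assume $f\ge0$, which covers the application to $|a(n)|^2$ in Corollary~\ref{Cor2}. Then $C_1=\operatorname{Res}_{s=1}L(f,s)>0$. Each $P_p$ equals the average over $a,b\in\mathbb Z_p$ of $g_p(a)g_p(b)g_p(a+b)$, where
\[
g_p(n)=\frac{f(p^{v_p(n)})}{(1-p^{-1})\sum_{j\ge0}f(p^j)p^{-j}}\ge0,
\]
and $g_p$ is positive on units. This average is therefore positive; for $p=2$, use that some $f(2^j)$ with $j\ge1$ is nonzero. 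By absolute convergence of the product, $\mathfrak S>0$, and your argument then goes through.
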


In \cite{TH2021}, the authors showed that there exist infinitely many three-term arithmetic progressions for which
\[
a(n)\,a(n+h)\,a(n+2h)\neq 0,
\]
where $a(n)$ denotes the $n$-th Fourier coefficient of a holomorphic cuspidal Hecke eigenform $F$, under the assumption of Selberg's $1/4$-conjecture.
Note that $|a(n)|^{2}$ is the coefficient sequence of the Rankin-Selberg $L$-function of $F$.
It is thus expected that $|a(n)|^{2}\in\mathcal{F}_{k}(0)$ for some $k$.
So we can improve the result under the assumptions. Here, we put the general result. 

\begin{Cor}\label{Cor2}
Suppose $|a(n)|^{2}\in\mathcal{F}_k(0)$.
Define
\[
B_{c}(X,H):=\bigl|\{(n,h)\in [X,2X]\times[-H,H] : |a(n)a(n+h)a(n+2h)|\ge c\}\bigr|.
\]
Then, for any fixed $c>0$,
\[
\liminf_{X\to\infty}\frac{B_{c}(X,X^{10/13+100\varepsilon})}{X^{1+10/13+100\varepsilon}} \gg_{f,c} 1.
\]
\end{Cor}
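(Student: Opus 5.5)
We apply Theorem \ref{Theorem} to $f(n)=|a(n)|^{2}$ and compare the resulting lower bound with an upper bound coming from divisor-boundedness and a second moment. Put $H=X^{10/13+100\varepsilon}$, which lies in the range allowed by Theorem \ref{Theorem}.

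\textbf{Step 1 (main term).} Since $f\ge 0$ and $f\in\mathcal{F}_k(0)$, the Dirichlet series $L(f,s)$ has a simple pole at $s=1$; for Rankin--Selberg coefficients this holds with positive residue. Theorem \ref{Theorem} then gives
\[
S:=\sum_{|h|\le H}\Bigl(1-\frac{|h|}{H}\Bigr)\sum_{X\le n\le 2X} f(n)f(n+h)f(n+2h)=\mathfrak{S}\,XH+O(XH^{1-\varepsilon}),
\qquad \mathfrak{S}:=\sum_{q\ge1}\sum_{\substack{a\bmod q\\(a,q)=1}} C_q^3 .
\]
We need $\mathfrak{S}>0$. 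I would prove this as follows. Every summand of $S$ is nonnegative, so we can run the same circle-method computation with $H$ replaced by a smaller admissible length, or restrict to $q=1$. The $q=1$ term equals (residue)$^3>0$. For the full series, positivity follows because $\mathfrak{S}$ should factor as an Euler product of local densities of $f$ along 3-term progressions, and each local factor is close to $1+O(p^{-2+\varepsilon})$ since $f(p)$ has mean $1$.

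\emph{This is the main obstacle.} The paper only records $D_{q_1,q_0}\ll q^{\varepsilon^2}$, which yields absolute convergence but does not exclude cancellation. I would first try to read off multiplicativity of $C_q$ in $q$ from \eqref{Dq}, show that the Euler factors at large primes are $1+O(p^{-2+3\varepsilon})$, and check directly that the finitely many small-prime factors are positive, each being a density of a nonnegative function. The conclusion of this step is $S\ge \tfrac12\mathfrak{S}XH$ for large $X$.

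\textbf{Step 2 (removing small values).} Write $F(n,h)=|a(n)a(n+h)a(n+2h)|$, so that $F^2=f(n)f(n+h)f(n+2h)$. Split $S$ according to whether $F\ge c$ or $F<c$. The pairs with $F<c$ contribute at most $c^2\cdot 3XH$. For the pairs with $F\ge c$, apply Cauchy--Schwarz:
\[
\sum_{F\ge c} F^2 \le B_c(X,H)^{1/2}\Bigl(\sum_{n,h} F^4\Bigr)^{1/2}.
\]

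\textbf{Step 3 (fourth moment).} We need $\sum_{n,h}F^4\ll XH$, possibly up to a power of $\log X$. Since $F^4\le d_k(n)^2d_k(n+h)^2d_k(n+2h)^2$, this follows from a Shiu-type or Nair--Tenenbaum bound for correlations of divisor-bounded functions. That bound carries a factor $(\log X)^{C}$, which would only give $B_c\gg XH/(\log X)^{C}$.

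To avoid the logarithmic loss, I would instead use the pointwise bound $f\le d_k\ll X^{\varepsilon^3}$ to sharpen Step 2. Taking $c$ small relative to $\mathfrak{S}$ gives
\[
B_c(X,H)\gg XH\,X^{-3\varepsilon^3}.
\]
Since $\varepsilon$ is arbitrary, we may absorb this factor by running the whole argument with $100\varepsilon$ replaced by a slightly smaller exponent. If the literal $\gg 1$ constant is required, the correlation bound with $k$-dependent logs must be sharpened using the weight $f$ itself. The cleanest route is to restrict to those $n$ whose prime factors below $X^{\delta}$ are few, so that $d_k$ is bounded there, and to apply Theorem \ref{Theorem} in this form.

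Finally, dividing by $X\cdot X^{10/13+100\varepsilon}\asymp XH$ and taking $\liminf$ gives the stated bound.
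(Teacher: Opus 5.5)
\textbf{There is a genuine gap, in Steps 2--3:} the passage from the asymptotic for $S$ to a positive proportion of pairs. Cauchy--Schwarz needs $\sum_{n,h}F^{4}\ll XH$. For the functions in question this fourth moment is in general larger by a power of $\log X$ (already $\sum_{n\le x}|\lambda_F(n)|^{4}\asymp x\log x$). So your method gives only $B_c\gg XH(\log X)^{-C}$, and only when $c^2$ is small compared with $\mathfrak{S}$. For larger fixed $c$ the split in Step 2 yields nothing, yet the corollary is asserted for every $c>0$.

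None of your repairs works:
\begin{itemize}
\item The pointwise bound $d_k(n)\ll X^{\varepsilon^{3}}$ replaces the logarithmic loss by a power loss, which is worse.
\item That loss cannot be absorbed by adjusting $100\varepsilon$. The corollary concerns the ratio $B_c(X,H)/(XH)$ itself, and a lower bound $\gg X^{-3\varepsilon^{3}}$ for it still tends to $0$. Working at a shorter length $H'<H$ and using $B_c(X,H)\ge B_c(X,H')$ just loses the factor $H'/H$.
\item Restricting to $n$ with few small prime factors destroys multiplicativity, so Theorem~\ref{Theorem} no longer applies.
\end{itemize}
In Step 1, keeping only $q=1$ is not legitimate either, since $C_q$ can be negative, e.g.\ $C_p\approx \mathrm{Res}_{s=1}L(f,s)\,(f(p)-1)/p$. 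The local-density factorization is the right idea, but it remains unproved.

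More fundamentally, no argument can close this gap, because the positive-proportion conclusion fails in exactly the situation the corollary targets. Take $f=|\lambda_F|^{2}$ for a level one non-CM Hecke eigenform. To meet the Euler-product condition literally, take instead $f=1\ast\lambda_{\mathrm{sym}^{2}F}$. This $f$ equals $|\lambda_F|^{2}$ on squarefree $n$, is nonnegative, has a pole only for principal $\chi$, and lies in $\mathcal{F}_4(0)$ under the generalized Lindel\"of hypothesis. In either case put $a=f^{1/2}$. The classical Rankin-type bound $\sum_{n\le x}|\lambda_F(n)|\ll x(\log x)^{-\delta}$ with $\delta>0$ gives $\sum_{n\le x}f(n)^{1/2}\ll x(\log x)^{-\delta}$.

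Now $\mathbf{1}_{F\ge c}\le (F/c)^{1/3}$ and $F^{1/3}=\prod_{j=0}^{2}f(n+jh)^{1/6}$. H\"older's inequality then yields $B_c(X,H)\le c^{-1/3}\prod_{j=0}^{2}\bigl(\sum_{n,h}f(n+jh)^{1/2}\bigr)^{1/3}\ll_c XH(\log X)^{-\delta}=o(XH)$. So the main term $\mathfrak{S}XH$ is carried by a density-zero set of pairs.

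What Theorem~\ref{Theorem} together with $\mathfrak{S}>0$ actually delivers is your logarithmically weakened bound for small $c$, i.e.\ a quantitative form of Corollary~\ref{Cor1}. The paper offers no argument for this corollary beyond the remark that the main term dominates the error term. That remark likewise gives existence, not a positive proportion.
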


\subsection{Sketch of the proof} 
We now give a brief outline of our proof strategy. For real numbers $x$ and $\alpha$, denote
\[
S_f(\alpha;x):=\sum_{x\le n\le x+2H} f(n)e(n\alpha ).
\]
Using the orthogonality relations, we have
\[
\int_{0}^{1} e(n\alpha)\,d\alpha=
\begin{cases}
1 & \text{if } n=0,\\
0 & \text{otherwise.}
\end{cases}
\]
Therefore,
\begin{equation}\begin{split}
\int_{X}^{2X} &\int_{0}^{1} S_{f_{1}}(\alpha;x)S_{f_{3}}(\alpha;x)\,S_{f_{2}}(-2\alpha;x)\,d\alpha\,dx
\\&=\sum_{n,m,r} f_{1}(n)f_{3}(m)f_{2}(r)1_{n+m=2r} \int_{X}^{2X} 1_{x \leq n \leq x+2H }   1_{x \leq r \leq x+2H }  1_{x \leq m \leq x+2H }\; dx. 
\end{split}\end{equation}
When $r=n
+h,$ the above integral is $2H-2|h|$ if $|h| \leq H.$ 
Therefore, we rewrite \eqref{tenarystart} as
\begin{equation}\label{tenaryintegral}
S(X,H)_{f_{1},f_{2},f_{3}}
=\frac{1}{2H}\int_{X}^{2X}\int_{0}^{1}  S_{f_{1}}(\alpha;x)S_{f_{3}}(\alpha;x)\,S_{f_{2}}(-2\alpha;x)\,d\alpha\,dx.
\end{equation}

To prove Theorems \ref{Theorem} and \ref{Theorem2}, our setups of the circle-method are similar except for the size of $q \leq Q.$
For Theorem \ref{Theorem}, let $Q=XH^{-1+5\varepsilon},$ for Thorem \ref{Theorem2}, let  $Q= H^{1/2}.$
Define major and minor arcs
\[
\mathfrak{M}:=\bigcup_{1\le q<Q}\ \bigcup_{\substack{a\bmod q\\(a,q)=1}}\left(\frac{a}{q}-\beta,\frac{a}{q}+\beta\right),
\qquad
\mathfrak{m}:=[1/Q,1+1/Q]\setminus\mathfrak{M},
\]
where $\beta=H^{-1+8\varepsilon}$.
Then
\begin{equation}\begin{split}
&S(X,H)_{f_{1},f_{2},f_{3}} 
\\&=\frac{1}{2H}\int_{X}^{2X}\!\left(\int_{\mathfrak{M}} S_{f_{1}}(\alpha;x) S_{f_{3}}(\alpha;x)S_{f_{2}}(-2\alpha;x)\,d\alpha
+\int_{\mathfrak{m}}S_{f_{1}}(\alpha;x) S_{f_{3}}(\alpha;x)S_{f_{2}}(-2\alpha;x)\,d\alpha \right)dx.
\end{split}\end{equation}
We choose $\beta=H^{-1+8\varepsilon}$ to balance the contributions of the major and minor-arcs.

To prove Theorem \ref{Theorem}, we follow the major arcs treatment in \cite{MRT2019i}.
For the minor arcs we use weighted exponential-sum estimates over short intervals.
Since the dependence on moduli $q\le Q$ is sufficiently mild, the large choice of $Q$ ensures sufficient cancellation on the minor arcs.

To prove Theorem \ref{Theorem2}, we simply use \eqref{uniformb}.

\begin{Rem}
Our arguments are somewhat similar to \cite{LX2018}, which begins with the circle method and then applies weighted exponential-sum bounds.
Note that \cite{LX2018} uses the supremum of a long exponential sum,
\[
\sup_{\alpha\in\mathbb{R}}\left|\sum_{1\le n\le X} f(n)e(\alpha n)\right|\ll X^{\beta}.
\]
Here, we separate major and minor arcs to restrict the range of $\alpha$ in the supremum, and we use second-moment estimates to obtain the needed exponential-sum bounds.

\end{Rem}

\subsection{Notation}
We will use standard notations throughout the paper. For any real number $\alpha$, we write $e(\alpha):=e^{2\pi i\alpha}$.
We denote the Euler totient function by $\phi(n)$, and we write $d_k(n)$ to denote the $k$-fold divisor function.
For integer $q\geq 1$, we write $\chi \pmod q$ for a Dirichlet character modulo $q$.

We set
\begin{equation}\label{Dq}
D_{q_{1},q_{0}}
:=
\operatorname{Res}_{s=1}\left(\sum_{\substack{n=1\\(n,q_1)=1}}^{\infty}\frac{f(q_0 n)}{n^{s}}\right)
\end{equation}
\begin{Rem} A minor technical issue arises when considering the $L$-function with the coefficient $f(q_{0}n)$ instead of $f(n)$. Note that $d_{k}(p^{r})=\frac{(k+r-1)!}{r!(k-1)!}.$
Let $\sigma=\Re(s)\in[1/2,1+\varepsilon]$. 
Using the Euler product and the divisor bound $|f(n)|\leq d_k(n)$, we obtain
\begin{equation}\label{ast}
\begin{split}
&\sum_{\substack{n=1\\(n,q_1)=1}}^{\infty}\frac{f(q_0 n)\chi(n)}{n^{s}}
\\&=L(f,\chi,s)
\prod_{p\mid q}\prod_{j=1}^{k}\left(1-\frac{\alpha_{f,j}(p)\chi(p)}{p^{s}}\right)^{-1}
\prod_{p\mid \frac{q_0}{(q_0,q_1)}}\left(f(q_0)+\frac{f(q_0 p)}{p^{s}}+\cdots\right)
\\
&\ll
d_k\!\left(\frac{q_0}{(q_0,q_1)}\right)\left|L(f,\chi,s)\right|
\prod_{p\mid q}\left(1+\frac{d_{k}(p)+1}{p^{\sigma}}+\frac{d_{k}(p^{2})+1}{p^{2\sigma}}+\cdots\right)
\prod_{p\mid \frac{q_0}{(q_0,q_1)}}\left(1+\frac{O(1)}{p^{\sigma}}+\cdots\right)
\\
&\ll
d_k\!\left(\frac{q_0}{(q_0,q_1)}\right)\left|L(f,\chi,s)\right|
\exp\!\Bigl(O_k\Bigl(\sum_{p\le \log_2 q} p^{-\sigma}\Bigr)\Bigr)
\\
&\ll_{k,\varepsilon}
q^{\varepsilon^{2}}\left|L(f,\chi,s)\right|.
\end{split}
\end{equation}
Here, we use the fact that 
$$1 \leq \left|\prod_{j=1}^{k}\left(1-\frac{|\alpha_{f,j}(p)\chi(p)|}{p^{\sigma}}\right)\right|\left(1+\frac{d_{k}(p)+1}{p^{\sigma}}+\frac{d_{k}(p^{2})+1}{p^{2\sigma}}+\cdots\right)$$
and the convergence of 
$$\left(1+\frac{d_{k}(p)+1}{p^{\sigma}}+\frac{d_{k}(p^{2})+1}{p^{2\sigma}}+\cdots\right).$$  
\end{Rem}

\subsection{Plan of the paper}
In Section \ref{exponential short} we establish an upper bound for weighted exponential sums over short intervals.
The proof of Theorem \ref{Theorem} is given in Section \ref{circle method}.
In Subsection \ref{major} we analyze the major arcs and derive the main term; the minor arcs are treated in Subsection \ref{minor}.
In Section \ref{Sec4} we prove Theorem \ref{Theorem2}.

\section{Exponential Sums over Short Intervals}\label{exponential short}
We now prove the more general version of Theorem \ref{Lemma13}.

\begin{Lemma}\label{Lemma14}
Let $0\leq \alpha<1, g\in \mathcal{F}_{k}(\alpha)$, and let $0\le a<q<X$.
Let $x\in[X,2X]$, and let $0<\eta<1$.
Then, for any $|\gamma|< 1$ with $|\gamma| H^{\eta-\varepsilon/2}\to\infty$, we have
\[
\sum_{m=x}^{x+H} g(m)\, e\!\left(m\left(\frac{a}{q}+\gamma\right)\right)
\ll_{g,k,\varepsilon} (q|\gamma| X)^{(\alpha+1)/2+\varepsilon^{2}}H^{1/2} + E(x,H)
\]
\end{Lemma}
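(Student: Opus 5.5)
The plan is to split the short interval $[x,x+H]$ into subintervals of length $H_0:=H^{\eta}$, on each of which the oscillation coming from $\gamma$ is controlled, and to convert the rational part of the phase into Dirichlet characters. The term $E(x,H)$ will collect the boundary and pole contributions, namely short sums of length about $H^{\eta}$ and main terms from the principal character; under the divisor bound alone it is $\ll H^{\eta}(\log X)^{k^2-1}$.

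First we treat the rational phase. Grouping $m$ by $d=(m,q)$ and writing $m=dm'$, $q=dq'$, we expand $e(am'/q')$ for $(m',q')=1$ through Gauss sums:
\[
e\!\left(\frac{am'}{q'}\right)=\frac{1}{\phi(q')}\sum_{\chi \bmod q'}\overline{\chi}(am')\,\tau(\chi).
\]
Here $|\tau(\chi)|\le q'^{1/2}$. The coefficients $g(dm')$ with $m'$ coprime to $q'$ have Dirichlet series equal to $L(g,\chi,s)$ times the local Euler factors, exactly as in \eqref{ast}. Those factors cost only $q^{\varepsilon^2}$ for $\Re s\ge 1/2$.

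Next we handle the $\gamma$-twist by Mellin inversion with a smooth weight. We write $e(m\gamma)w(m)$, with $w$ a smooth cutoff of $[x,x+H]$ whose transition length is $H^{\eta}$, as an inverse Mellin transform $\frac{1}{2\pi i}\int \widetilde{W}(s)m^{-s}\,ds$. Stationary phase shows that $\widetilde{W}(1/2+it)$ is concentrated on $|t|\asymp |\gamma| X$, in a window of width about $|\gamma|H+X/H^{\eta}$, with size roughly $X^{1/2}(|\gamma|X)^{-1/2}$ there and rapid decay outside. The hypothesis $|\gamma|H^{\eta-\varepsilon/2}\to\infty$ ensures that $|\gamma| X$ dominates $X/H^{\eta}$, so the whole relevant $t$-range sits at height $T\asymp q|\gamma|X$ after accounting for the conductor. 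We then shift the contour from $\Re s=1+\varepsilon$ to $\Re s=1/2$. The poles at $s=1$, which occur only for principal $\chi$, contribute a smooth main term of size $H^{\eta}(\log X)^{k^2-1}$ coming from the transition regions; we put this into $E(x,H)$. The sharp-cutoff errors, which are sums over intervals of length $H^{\eta}$ bounded by $d_k$ averages, also go into $E(x,H)$.

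On the line $\Re s=1/2$ we apply Cauchy--Schwarz in $t$:
\[
\int |L(g,\chi,1/2+it)|\,|\widetilde W(1/2+it)|\,dt \le \Bigl(\int_{|t|\asymp T}|L|^2\Bigr)^{1/2}\Bigl(\int |\widetilde W|^2\Bigr)^{1/2}.
\]
By \eqref{lindelof}, the first factor is $\ll q^{\varepsilon^2}T^{(1+\alpha)/2+\varepsilon^2}$. By Plancherel, $\int|\widetilde W(1/2+it)|^2\,dt\asymp \int |w(u)|^2u^{-1}\,du\asymp H/X$, so the second factor is $\ll (H/X)^{1/2}$. Combining these and multiplying by $x^{1/2}$, the sum over characters contributes $\tau(\chi)/\phi(q')$ with $\phi(q')$ characters, which costs at most $q^{1/2}$. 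This yields a bound of shape $(q|\gamma|X)^{(\alpha+1)/2+\varepsilon^2}H^{1/2}$ after absorbing powers of $q$ into the conductor height, as claimed.

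The main obstacle is the Mellin analysis of the phase $e(m\gamma)$ over a short window. One needs the mass of $\widetilde W$ localized at height $|\gamma|X$ with $L^2$ norm $\sim (H/X)^{1/2}$, and one must verify that smoothing on scale $H^{\eta}$ does not push the relevant $t$ beyond $q|\gamma|X$. This is exactly where the condition $|\gamma|H^{\eta-\varepsilon/2}\to\infty$ enters. I would first try the explicit Plancherel computation rather than pointwise stationary-phase bounds.
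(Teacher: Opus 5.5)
Your proposal follows the paper's proof. The common steps are: Gauss-sum expansion into characters, with the $q^{\varepsilon^2}$ Euler-factor loss from \eqref{ast}; a smooth weight with transition length $H^{\eta}$; Mellin inversion and a shift to $\Re s=1/2$; truncation to $|t|\ll|\gamma|X$ by non-stationary phase; and Cauchy--Schwarz against \eqref{lindelof} together with an $L^2$ bound for the Mellin transform. Your Plancherel step is a cleaner substitute for the paper's direct double integral and saves its $\log X$. With $\widetilde W(s)=\int w(u)e(\gamma u)u^{s-1}\,du$ it gives $\int|\widetilde W(1/2+it)|^2\,dt=2\pi\int|w|^2\asymp H$ directly, so no separate factor $x^{1/2}$ is needed.

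Three bookkeeping points need fixing; none changes the method. First, the height is $T\asymp|\gamma|X$ regardless of $q$. All of the $q^{1/2}$ comes from the Gauss sums, so taking $T\asymp q|\gamma|X$ as you wrote would cost an extra $q^{1/2}$. Second, the principal-character pole is negligible rather than of size $H^{\eta}(\log X)^{k^2-1}$: repeated integration by parts gives $\widetilde W(1)=\int w(u)e(\gamma u)\,du\ll_l H^{\eta}(|\gamma|H^{\eta})^{-l}$. This, together with the $t$-truncation, is exactly where the hypothesis $|\gamma|H^{\eta-\varepsilon/2}\to\infty$ is used. Third, your opening plan of splitting $[x,x+H]$ into pieces of length $H^{\eta}$ is never used and would actually lose, so drop it.
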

where $$E(x,H)\ll \sum_{x-H^{\eta}\le m\le x} d_k(m)
+\sum_{x+H\le m\le x+H+H^{\eta}} d_k(m).$$
\begin{proof}
Let
\[
A(s):=\sum_{n=1}^{\infty} g(n)e\!\left(\frac{an}{q}\right)n^{-s}.
\]
For simplicity, assume $(a,q)=1$.
It is known that when $(n,q)=1$, we have
\[
e\!\left(\frac{an}{q}\right)=\frac{1}{\phi(q)}\sum_{\chi \bmod q}\tau(\bar{\chi})\,\chi(an),
\]
where
\[
\tau(\chi):=\sum_{\substack{1\le m\le q\\ (m,q)=1}}\chi(m)\,e\!\left(\frac{m}{q}\right)
\]
is the Gauss sum.
Therefore,
\begin{equation}\label{additive}
\sum_{n=1}^{\infty} g(n)e\!\left(\frac{an}{q}\right)n^{-s}
=
\sum_{q=q_0 q_1}\frac{1}{\phi(q_1) q_0^{s}}
\sum_{\chi \bmod q_1}\tau(\bar{\chi})\,\chi(a)
\sum_{\substack{n=1\\ (n,q_1)=1}}^{\infty} g(q_0 n)\chi(n)\,n^{-s}.
\end{equation}

Let $\psi$ be a smooth compactly supported function with $\operatorname{supp}(\psi)\subset [-H^{-1+\eta},\,1+H^{-1+\eta}]$, such that $\psi(x)=1$ for $0\le x\le 1$, $\psi(x)\le 1$ elsewhere, and
\[
\psi^{(j)}(x)\ll_{j} H^{(1-\eta)j}.
\]
Define
\[
B(s,\gamma):=\int_{0}^{\infty} \psi\!\left(\frac{y-x}{H}\right)e(\gamma y)\,y^{s-1}\,dy.
\]
By Mellin transform and inversion,
$$\mathcal{M} \left(\psi\left(\frac{y-x}{H}\right)e(\gamma y)\right) (s):=  \int_{0}^{\infty} \psi\left(\frac{y-x}{H}\right)e(\gamma y) y^{s=1}dy, $$
$$\psi\left(\frac{m-x}{H}\right)e(m\gamma )= \frac{1}{2\pi i} \int_{(\sigma)}\mathcal{M} \left(\psi\left(\frac{y-x}{H}\right)e(\gamma y)\right)(s) \; m^{-s}ds.$$
Therefore, 
\[
\sum_{m=1}^{\infty} g(m)e\!\left(m\left(\frac{a}{q}+\gamma\right)\right)\psi\!\left(\frac{m-x}{H}\right)
=
\frac{1}{2\pi i}\int_{(\sigma')} A(s)\,B(s,\gamma)\,ds,
\]
where $\sigma'=1+\frac{1}{\log X}$.
Note that 
after removing the weight $\psi$, we have
\begin{equation}
\begin{split}
\sum_{m=1}^{\infty} & g(m)e\!\left(m\left(\frac{a}{q}+\gamma\right)\right)\psi\!\left(\frac{m-x}{H}\right)
-\sum_{m=x}^{x+H} g(m)e\!\left(m\left(\frac{a}{q}+\gamma\right)\right)
\\
&\ll_{\varepsilon}
\sum_{x-H^{\eta}\le m\le x} d_k(m)
+\sum_{x+H\le m\le x+H+H^{\eta}} d_k(m).
\end{split}
\end{equation}
Define
\[
L(g,\chi,s;q_0,q_1):=\sum_{\substack{n=1\\ (n,q_1)=1}}^{\infty} g(q_0 n)\chi(n)\,n^{-s}.
\]
Using \eqref{additive}, we get
\begin{equation}\label{absa}
\begin{split}
\int_{(\sigma)} A(s)B(s,\gamma)\,ds
&=
\sum_{q=q_0 q_1}\sum_{\chi \bmod q_1}\tau(\bar{\chi})\,\chi(a)\,\frac{1}{\phi(q_1)}
\int_{(\sigma)}L(g,\chi,s;q_0,q_1) q_{0}^{-s}\,B(s,\gamma)\,ds
\\
&\quad
+O_{l}\!\left(Hd_{2}(q) \frac{1}{\left(|\gamma| H^{\eta}\right)^{l}}\right),
\end{split}
\end{equation}
where $1/2\le \sigma<1$, for any $l>0.$
The error term comes from the possible simple pole of $L(g,\chi_{q_1,0},s;q_0,q_1)$ together with the bound $B(1,\gamma)\ll_{l} H|H^{\eta}\gamma|^{-l}$, so it is bounded by 
$$ \sum_{q=q_{0}q_{1}} \frac{|\mu(q_{1}|}{\phi(q_{1})q_{0}} \textrm{Res}_{s=1}L(g,\chi_{q_{1},0},s; q_{0},q_{1}) B(1,\gamma) = O_{l}\!\left(Hd_{2}(q) \frac{1}{\left(|\gamma| H^{\eta}\right)^{l}}\right)$$ for any $l>0.$ Since $|\gamma|H^{\eta}=o(H^{\epsilon/2}),$
taking $l$ sufficiently large makes this negligible.

We split the $t$-integral into
\[
\int_{|t|\ge 5\pi |\gamma| X} + \int_{|t|\le 5\pi |\gamma| X}.
\]

For the first integral, we use \cite[Lemma 8.1]{BKY13}. 
Note that $\left(\psi\!\left(\frac{x}{H}\right)\right)^{(j)}\ll H^{-j\eta}$, $h(y):=t\log y+2\pi \gamma y$ satisfies
$|\frac{d  (h(y))}{dy} |= |\frac{t}{y} + 2\pi \gamma| \geq |\gamma| x,$
$|\frac{d^{k}  h}{dy^{k}}| \ll \frac{|t|}{|y|^{k}}$ for $k \geq 2.$
Under the assumption $\gamma H^{\eta-\varepsilon/2}\to\infty$, \cite[Lemma 8.1]{BKY13} yields, for any $A>0$,
\[
B(s,\gamma)\ll_A  H \min\!\left(\frac{X\cdot \frac{|t|}{X}}{\sqrt{|t|}}, \frac{|t|}{X}H^{\eta}\right)^{-A}.
\]
Hence this range is negligible.

Now consider the second integral. We estimate
\begin{equation}
\begin{split}
\int_{|t|\leq 5\pi |\gamma| X} |B(\sigma+it,\gamma)|^2\,dt
&=
\int_{|t|\leq 5\pi |\gamma| X}\int_0^{\infty}\!\!\int_0^{\infty}
\psi\!\left(\frac{y-x}{H}\right)\psi\!\left(\frac{z-x}{H}\right)
\\&\qquad\qquad\cdot e\bigl(\gamma(y-z)\bigr)\left(\frac{y}{z}\right)^{it}(yz)^{\sigma-1}\,dy\,dz\,dt.
\end{split}
\end{equation}
Interchanging the order of integration yields
\[
\int_{|t|\leq 5\pi |\gamma| X} |B(\sigma+it,\gamma)|^2\,dt
\ll
X^{2\sigma-2}
\int_{x-H^{\eta}}^{x+H+H^{\eta}}\!\!\int_{x-H^{\eta}}^{x+H+H^{\eta}}
\min\!\left(|\gamma|X,\frac{1}{|\log(y/z)|}\right)\,dy\,dz.
\]
When $|y-z|<1/|\gamma|$, we bound the minimum by $|\gamma|X$:
\[
\int_{x-H^{\eta}}^{x+H+H^{\eta}}
\left(\int_{z-\frac{1}{|\gamma|}}^{z+\frac{1}{|\gamma|}} |\gamma|X\,dy\right)\,dz
\ll HX.
\]
Moreover, since $z \leq 2y$, 
\begin{equation}
\begin{split}
&\int_{x-H^{\eta}}^{x+H+H^{\eta}}
\left(\int_{x-H^{\eta}}^{z-\frac{1}{|\gamma|}}+\int_{z+\frac{1}{|\gamma|}}^{x+H+H^{\eta}}\right)
\frac{1}{|\log(y/z)|}\,dy\,dz
\\
&\qquad\ll
\int_{x-H^{\eta}}^{x+H+H^{\eta}}
\left(\int \frac{y}{|z-y|}\,dy\right)\,dz
\ll HX\log X.
\end{split}
\end{equation}
Therefore,
\[
\int_{|t|\leq 5\pi |\gamma| X} |B(\sigma+it,\gamma)|^2\,dt
\ll X^{2\sigma-1}H\log X.
\]

Finally, by H\"older's inequality and \eqref{lindelof},
\begin{equation}
\begin{split}
\int_{|t|\leq 5\pi |\gamma| X}
&\Bigl|L(g,\chi,\sigma+it;q_0,q_1)\,B(\sigma+it,\gamma)\Bigr|\,dt
\\
&\ll
\left(\int_{|t|\leq 5\pi |\gamma| X}\bigl|L(g,\chi,\sigma+it;q_0,q_1)\bigr|^2\,dt\right)^{1/2}
\left(\int_{|t|\leq 5\pi |\gamma| X}|B(\sigma+it,\gamma)|^2\,dt\right)^{1/2}
\\
&\ll
(|\gamma|X)^{1/2+\alpha/2+\varepsilon^2/2} \,q^{\varepsilon^{2}/2}\,X^{\sigma-1/2}H^{1/2}\,(\log X)^{1/2}.
\end{split}
\end{equation}

Thus,
\[
\int_{(\sigma)} A(s)B(s,\gamma)\,ds
\ll_{g,k,\varepsilon}
\sum_{q=q_0 q_1}\ \sum_{\chi \bmod q_1}
|\tau(\bar{\chi})|\,\frac{1}{\phi(q_1)}
\frac{q^{\varepsilon^{2}}}{q_{0}^{\sigma}}
\Bigl(X^{\sigma+\alpha/2+\varepsilon^2/2}H^{1/2}|\gamma|^{\alpha/2+1/2+\varepsilon^2/2}\Bigr)(\log X)^{1/2}.
\]
Taking $\sigma=1/2$ and using $|\tau(\bar{\chi})|\ll q_1^{1/2}$, this is bounded by
\[
q^{1/2+\varepsilon^2}X^{1/2+\alpha/2+\varepsilon^2/2}H^{1/2}|\gamma|^{1/2+\alpha/2+\varepsilon^2/2}(\log X)^{1/2}\sum_{q=q_{0}q_{1}} \frac{1}{q_{0}^{1/2}}.
\]

This completes the proof.
\end{proof}
Therefore, if $a(n)\in\mathcal{F}_{k}(0)$, then
\[
\frac{1}{X}\int_{X}^{2X}\sup_{\alpha\in\mathfrak{m}}\bigl|S_a(-2\alpha;x)\bigr|dx
\ll_{g,k,\varepsilon}
q^{1/2+\varepsilon^{2}}X^{1/2+\varepsilon^{2}}H^{1/2}\left|\alpha-\frac{a}{q}\right|^{1/2+\varepsilon^{2}}
+H^{\eta} (\log X)^{k-1},
\]
as long as
\[
\left(\alpha-\frac{a}{q}\right)H^{\eta-\varepsilon/2}\to\infty.
\]
Later, to apply the above bound to achieve a saving of $H^{\varepsilon}$, we require
\[
H^{-1+7.5\varepsilon}=o\left(|\gamma|\right),
\qquad\text{and}\qquad
\frac{X}{Q}\ll H^{1-2\varepsilon} \quad \text{when} \quad Q \neq 1.
\]

\begin{Lemma}\label{moving2}
Assume that for any sufficiently large $T,$
\[
\int_{-T}^{T}\bigl|L(f,\chi,1/2+it)\bigr|^{2}\,dt \ll q^{\varepsilon}(T+1)^{1+\alpha+\varepsilon}
\]
for some $\alpha>0$.
Then for any sufficiently large $T$, there exists $T_0\in[T,2T]$ such that
\[
\sup_{\sigma\in[1/2,1]}\bigl|L(f,\sigma+iT_0)\bigr|
\ll_{\varepsilon} q^{\varepsilon/2}T^{\alpha/2+\varepsilon}.
\]
\end{Lemma}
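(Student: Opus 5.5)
The plan is to combine a pigeonhole choice of the height $T_0$ with a local mean-value bound, and then push the bound from $\sigma=1/2$ to all $\sigma\in[1/2,1]$.

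\emph{Step 1: choosing $T_0$.} By the hypothesis, the second moment over $[T,2T]$ is $\ll q^{\varepsilon}T^{1+\alpha+\varepsilon}$. Hence the set of $t\in[T,2T]$ with $|L(f,\chi,1/2+it)|\ge q^{\varepsilon/2}T^{\alpha/2+\varepsilon}$ has measure $\ll T^{1-\varepsilon}$. This controls only the value on the critical line, and only pointwise, which is not enough. So I will pigeonhole a smoothed local average instead. Split $[T,2T]$ into $\asymp T$ unit intervals $I_j$. Since
\[
\sum_j \int_{I_j} |L(f,\chi,1/2+it)|^2\,dt \ll q^{\varepsilon}T^{1+\alpha+\varepsilon},
\]
all but $O(T^{1-\varepsilon})$ of the intervals satisfy $\int_{I_j^{*}}|L|^2\,dt\ll q^{\varepsilon}T^{\alpha+2\varepsilon}$, where $I_j^{*}$ is $I_j$ enlarged by $1$ on each side. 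Pick such a $j$ and let $T_0$ be the centre of $I_j$.

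\emph{Step 2: pointwise bound on the line $\sigma=1/2$.} From the averaged bound I will get a pointwise one by a Cauchy/subharmonic estimate. Specifically, $|L(s_0)|^2$ is bounded by the average of $|L|^2$ over a small disc around $s_0$. This requires controlling $L$ slightly to the left of $1/2$, which is not available, since continuation is only assumed for $\Re s>1/2$. To avoid this I shift: bound $|L(1/2+\delta+iT_0)|$ with $\delta=1/\log T$. A Gallagher/Sobolev-type inequality gives
\[
|F(t_0)|^2\ll \int_{I}|F|^2\,dt+\int_I |F|\,|F'|\,dt,
\]
with $F'$ controlled by Cauchy's formula on discs of radius $\asymp 1/\log T$ inside $\Re s>1/2$. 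This costs only $T^{\varepsilon}$-factors. For that I also need second-moment control on lines $\sigma\in[1/2,1/2+\delta']$, which I get by convexity of mean values (Gabriel's theorem) between $\sigma=1/2$ and $\sigma=1+\varepsilon$, where $L$ is bounded by absolute convergence. Equivalently, I can work with the second moment on $\sigma=1/2$ directly and accept the weaker exponent.

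\emph{Step 3: extending to $\sigma\in[1/2,1]$.} I will apply Phragmén--Lindelöf (the Hadamard three-lines theorem) to $L(f,\chi,s)$ on the horizontal segment near $T_0$. To make the maximum principle legitimate I apply it in a thin rectangle $[1/2,2]\times[T_0-1,T_0+1]$. On the right edge $\sigma=2$, $|L|\ll1$. On the left edge, Steps 1--2 give the bound $q^{\varepsilon/2}T^{\alpha/2+\varepsilon}$ throughout the interval $I_j^{*}$; to have this on the whole left edge I choose $I_j$ so that the good property holds on a neighbourhood, which is why the pointwise estimate comes from the local integral. The horizontal edges are the obstacle. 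There I can use a crude polynomial bound for $L$ (finite order, from analytic continuation). Alternatively, I replace the rectangle by a Gaussian-weighted maximum principle: multiply by $e^{(s-iT_0)^2}$, which makes the horizontal edge contributions negligible provided $L$ has polynomial growth in the strip.

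\emph{Main obstacle.} The hardest step is Step 3's edges together with Step 2's passage from an $L^2$ bound to a pointwise bound without any information left of $\Re s=1/2$. I would first try the Gaussian-weighted Phragmén--Lindelöf argument, assuming polynomial growth of $L$ in $1/2<\sigma\le 2$. If that fails, I would weaken the claim to $\sigma\in[1/2+1/\log T,1]$, which suffices for the contour shifts later in the paper.
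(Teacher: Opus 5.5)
\textbf{There is a genuine gap: your argument stops short of $\sigma=\tfrac12$.}

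Your outline does not prove the lemma as stated. It yields the bound only for $\sigma\in[\tfrac12+1/\log T,1]$. The remaining range $[\tfrac12,\tfrac12+1/\log T)$, including the line $\sigma=\tfrac12$ itself (part of the asserted supremum), is exactly what your fallback gives up.

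The breakdown is in Step 3. You put the left edge of the rectangle at $\sigma=\tfrac12$ and say Steps 1--2 control it on $I_j^{*}$. But Step 1 gives only an $L^2$ bound on $\sigma=\tfrac12$, and Step 2 gives pointwise bounds only on $\sigma=\tfrac12+\delta$. So the maximum principle can only be run on $[\tfrac12+\delta,2]$.

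Letting $\delta\to0$ does not rescue this, because your choice of $T_0$ is made at the single scale $1$. The condition $\int_{I_j^{*}}|L(\tfrac12+it)|^2\,dt\le\lambda$ is compatible with $|L(\tfrac12+it)|^2\approx\lambda/r$ on an interval of length $r\ll1$ around $T_0$. Nothing in your argument then prevents $|L(\sigma+iT_0)|$ from being large for $\sigma-\tfrac12\asymp r$. The $\delta^{-O(1)}$ losses in the Cauchy/Gallagher step are the symptom of this. The paper's own proof is only a pointer to Ramachandra--Sankaranarayanan and to Phragm\'en--Lindel\"of, so the issue is between your argument and the statement, not a departure from a detailed proof in the paper.

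\textbf{The missing idea is a scale-invariant choice of $T_0$.} Let $g(t)=|L(\tfrac12+it)|^2$ and $\lambda=q^{\varepsilon}T^{\alpha+2\varepsilon}$.
\begin{itemize}
\item By the weak-type $(1,1)$ Hardy--Littlewood maximal inequality, the set of $T_0\in[T,2T]$ with $\sup_{0<r\le C\log T}\frac{1}{2r}\int_{T_0-r}^{T_0+r}g\,dt>\lambda$ has measure $\ll\lambda^{-1}\int_{T/2}^{3T}g\ll T^{1-\varepsilon}$. So a good $T_0$ exists, and it can also be taken to be a Lebesgue point of $g$.
\item Use the finite-order growth in the strip $\tfrac12<\sigma<2$ that you already flag; it is tacit in the paper as well. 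If $L$ has a pole at $s=1$, first multiply by $(s-1)/(s+1)$. Then subharmonicity of $\log|L|$ gives $\log|L(s)|\le\int\log|L|\,d\omega_s$, where $\omega_s$ is the harmonic measure of the strip. This is the precise form of the Phragm\'en--Lindel\"of input.
\item The left-edge density of $\omega_s$ seen from $s=\sigma+iT_0$ is $\ll(\sigma-\tfrac12)/\bigl((\sigma-\tfrac12)^2+(t-T_0)^2\bigr)$, and it decays like $e^{-c|t-T_0|}$ for $|t-T_0|\ge1$. The part with $|t-T_0|>C\log T$ is negligible by the hypothesis, and $|L(2+it)|\ll1$.
\item Jensen's inequality then gives $|L(\sigma+iT_0)|^2\ll\lambda$ uniformly for $\sigma\in(\tfrac12,1]$. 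The case $\sigma=\tfrac12$ follows from the Lebesgue-point property, or from continuity.
\end{itemize}
This replaces your Steps 2--3 altogether.

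\textbf{Repairs needed even for your weakened range.}
\begin{itemize}
\item Gallagher plus Cauchy on discs requires local mean squares on the lines $\sigma\in[\tfrac12+\tfrac{\delta}{2},\tfrac12+\tfrac{3\delta}{2}]$ over $I_j^{*}$. Step 1 does not supply these, and Gabriel's theorem gives only global averages. You would have to pigeonhole the double integral over $[\tfrac12,\tfrac12+2\delta]\times I_j^{*}$ instead.
\item The unweighted rectangle of height $2$, with a crude $T^{A}$ bound on its horizontal edges, fails. Those edges carry harmonic measure $\gg1$ as seen from $1+iT_0$, so a power of $T^{A}$ leaks into the bound.
\item With the weight $e^{(s-iT_0)^2}$ the argument works only if the rectangle has height $\gg\sqrt{\log T}$. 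You then need left-edge bounds on a window of that length, not just on $I_j^{*}$.
\end{itemize}
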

\begin{proof}
The proof follows in a standard way, see, for example, \cite[Lemma 2]{RS1991}, or apply the Phragm\'en-Lindel\"of principle.
\end{proof}

\section{Proof of Theorem \ref{Theorem}}\label{circle method}
\subsection{Major arcs}\label{major}
We first represent the exponential sum as a combination of averages of $f_{1}(n)$ twisted by Dirichlet characters.
As usual, the main term arises from the contribution of principal characters.
We then apply Lemma \ref{moving2} to control the error term. Note that $\beta=H^{-1+8\varepsilon}.$
To obtain precise asymptotics, we use the Poisson summation formula.

\begin{Lemma}[Summation by parts]\label{sum}
Let $x\in[X,2X]$. Then for any $\gamma\in\mathbb{R}$,
\[
\sum_{x \le n \le x+2H} f(n)\, e\!\left(\frac{an}{q} + \gamma n\right)
= \int_{x}^{x+2H} C_q\, e(\gamma y)\,dy
+ O\!\left(E(x+2H)+\int_{x}^{x+2H} |\gamma|\,E(x'')\,dx''\right),
\]
where
\[
E(x''):=\left|\sum_{x \le n \le x''} f(n)\, e\!\left(\tfrac{an}{q}\right) - \int_{x}^{x''} C_q\,dx\right|,
\]
and $C_q$ is defined in \eqref{Cq}.
\end{Lemma}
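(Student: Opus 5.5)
This is Abel summation applied to the counting function of $f(n)e(an/q)$ on $[x,x'']$, with the linear main term $C_q(x''-x)$ split off. The remainder is then estimated crudely through $E$.

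Define the partial-sum function
\[
F(y):=\sum_{x\le n\le y} f(n)\,e\!\left(\tfrac{an}{q}\right)\qquad (x\le y\le x+2H),
\]
and split it as $F(y)=C_q\,(y-x)+R(y)$, where $|R(y)|=E(y)$ by the definition of $E$. The weight $e(\gamma y)$ is smooth in $y$, so the sum can be written as a Riemann--Stieltjes integral against $dF$:
\[
\sum_{x\le n\le x+2H} f(n)e\!\left(\tfrac{an}{q}+\gamma n\right)=\int_{x^-}^{x+2H} e(\gamma y)\,dF(y).
\]
Substituting $dF=C_q\,dy+dR$, the first piece is exactly the main term $\int_x^{x+2H}C_q e(\gamma y)\,dy$.

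For the $dR$ piece we integrate by parts:
\[
\int e(\gamma y)\,dR(y)=e(\gamma(x+2H))R(x+2H)-e(\gamma x)R(x^-)-2\pi i\gamma\int_x^{x+2H}R(y)e(\gamma y)\,dy.
\]
The boundary term at $x^-$ vanishes, since $F(x^-)=0$ and the linear part is $0$ there. The boundary term at $x+2H$ has modulus $E(x+2H)$. The last integral is bounded by $2\pi|\gamma|\int_x^{x+2H}E(x'')\,dx''$. Together these give the stated $O$-term, with the constant $2\pi$ absorbed into the implied constant.

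The only point needing care is the endpoint convention at $y=x$, because the sum includes $n=x$ when $x$ is an integer. This is handled by starting the Stieltjes integral at $x^-$ (or at $x-\delta$ with $\delta\to0$). Even if one starts at $x$ instead, the discrepancy is a single term $f(x)$, which is absorbed into $E$. There is no real obstacle: the lemma is purely formal, and all the arithmetic content is deferred to bounding $E(x'')$ on the major arcs (via the character decomposition \eqref{additive}, with principal characters giving $C_q$ and Lemma \ref{moving2} controlling the rest).
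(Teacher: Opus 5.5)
Your argument is correct and is the standard Abel summation (Riemann--Stieltjes integration by parts against $F(y)=C_q(y-x)+R(y)$) behind the summation-by-parts lemma of \cite{MRT2019i}, which the paper cites instead of proving. One small refinement to your endpoint remark: if you integrate over $(x,x+2H]$, the $n=x$ term is not merely ``absorbed into $E$'' but cancels exactly against the boundary term $-e(\gamma x)R(x)$, so the identity holds with no extra error under either convention.
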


\begin{proof}
See \cite[Lemma 2.1]{MRT2019i}.
\end{proof}
\begin{Lemma}\label{attachingrationalerror}
Let $x'\in[x,x+2H]$. Then
\[
E(x') \ll_{\varepsilon} q^{1/2+2\varepsilon^{2}}\,X^{1/2+2\varepsilon}.
\]
\end{Lemma}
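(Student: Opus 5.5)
We bound $E(x')$ by writing the rationally twisted partial sum through characters and using Perron's formula. The main term $\int_x^{x'}C_q\,dy$ will come from the residue at $s=1$. Expanding $e(an/q)$ via \eqref{additive}, we get
\[
\sum_{x\le n\le x'} f(n)e\!\left(\tfrac{an}{q}\right)=\sum_{q=q_0q_1}\frac{1}{\phi(q_1)}\sum_{\chi\bmod q_1}\tau(\bar\chi)\chi(a)\sum_{x/q_0\le n\le x'/q_0,\ (n,q_1)=1} f(q_0n)\chi(n).
\]
For each pair $(q_0,q_1)$ and each $\chi$, we apply a truncated Perron formula to the inner sum with Dirichlet series $L(f,\chi,s;q_0,q_1)$. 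The line of integration is $\Re s=1+1/\log X$ and the height is $T$, with $T$ chosen as in Lemma \ref{moving2} (some $T_0\in[T,2T]$). The truncation error is $\ll X^{1+\varepsilon}/T$, using $|f|\le d_k$.

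Next we shift the contour to $\Re s=1/2$. Only the principal character $\chi_{q_1,0}$ can contribute a pole, and it is simple at $s=1$. Its residue is
\[
D_{q_1,q_0}\,\frac{x'-x}{q_0}.
\]
Since $\tau(\chi_{q_1,0})=\mu(q_1)$, this reassembles exactly into $\int_x^{x'}C_q\,dy$ with $C_q$ as in \eqref{Cq}. The two horizontal segments at height $\pm T_0$ are controlled by Lemma \ref{moving2} together with \eqref{ast}. They contribute
\[
\ll q^{\varepsilon^2}\,T^{\varepsilon}\,X/T .
\]
The vertical segment on $\Re s=1/2$ is treated by dyadic decomposition in $t$ and Cauchy--Schwarz, using \eqref{lindelof} with $\alpha=0$ together with \eqref{ast}. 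This gives
\[
\ll (q_0)^{-1/2}\,X^{1/2}\,q^{\varepsilon^2}\,T^{1/2+\varepsilon^2}\log T .
\]

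To balance, we take $T\asymp X^{1/2}$. Both the truncation error and the vertical-line contribution are then about $X^{1/2+\varepsilon}$ per character. More precisely, the vertical line gives roughly $X^{1/2}T^{1/2}=X^{3/4}$, which is too weak. So we instead remove the Perron truncation by smoothing, in the same way Lemma \ref{Lemma14} was handled.

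Concretely, the plan is to use a smooth cutoff of $[x,x']$ at scale $X^{1/2}$. The error from removing the smoothing is at most $X^{1/2+\varepsilon}$ by the divisor bound in short intervals. The Mellin transform of the smoothed weight decays rapidly beyond $|t|\gg X^{1/2}$, and for $|t|\ll X^{1/2}$ it has $L^2$ norm
\[
\ll (X^{2\sigma-1}\cdot X^{1/2})^{1/2}
\]
after the same $\min(\cdot,1/|\log(y/z)|)$ computation as in Lemma \ref{Lemma14}. Here we use $H\le X$, but the relevant length is governed by the transform. Cauchy--Schwarz against the second moment then gives $X^{1/2+\varepsilon}$ per character. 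Lemma \ref{moving2} is still useful to justify the contour shift at a good height.

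Finally we sum over characters and factorizations. Using $|\tau(\bar\chi)|\le q_1^{1/2}$, the $\phi(q_1)$ characters cancel the factor $1/\phi(q_1)$, and the divisor sum over $q=q_0q_1$ is $\ll q^{\varepsilon^2}$. This yields $E(x')\ll q^{1/2+2\varepsilon^2}X^{1/2+2\varepsilon}$.

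The main obstacle is the vertical-line estimate. One must verify that the smoothed Mellin transform has $L^2$ mass compatible with $X^{1/2}$, and not $X^{1/2}H^{1/2}$ with $H$ large. The point is that the interval $[x,x']$ has length up to $2H$, but the smoothing scale, not the length, controls the $t$-range. If that fails, the fallback is to accept the bound $X^{1/2}T^{1/2}$ with $T$ minimal, determined by the smoothing scale $X^{1/2}$. This is where the $2\varepsilon$ in the exponent absorbs the logarithmic losses.
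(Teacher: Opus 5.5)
There is a genuine gap, and it sits exactly at the step you flag as the main obstacle. Write $w$ for your weight (equal to $1$ on $[x,x']$, smoothed at scale $X^{1/2}$) and $\tilde w(s)=\int_0^\infty w(y)y^{s-1}\,dy$. You claim $\int_{|t|\le X^{1/2}}|\tilde w(\sigma+it)|^2\,dt\ll X^{2\sigma-1}X^{1/2}$, and this is false once $x'-x\gg X^{1/2}$. By Mellin--Plancherel, $\int_{\mathbb{R}}|\tilde w(\sigma+it)|^2\,dt=2\pi\int_0^\infty|w(y)|^2y^{2\sigma-1}\,dy\asymp X^{2\sigma-1}(x'-x)$. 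The pointwise bound $|\tilde w(\sigma+it)|\ll X^{\sigma}/|t|$ shows that $|t|>X^{1/2}$ carries only $O(X^{2\sigma-1}X^{1/2})$ of this. So for $x'-x\asymp H$ the range $|t|\le X^{1/2}$ carries mass of order $X^{2\sigma-1}H$. The computation you cite from Lemma \ref{Lemma14} also returns $X^{2\sigma-1}H\log X$, because the diagonal $y\approx z$ runs over the whole support of length $\asymp H$. The smoothing scale only decides where $\tilde w$ becomes negligible, not how large its $L^2$ norm is. Consequently a single Cauchy--Schwarz over $|t|\le X^{1/2}$ gives only about $X^{1/4}H^{1/2}$. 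That is essentially $X^{3/4}$ for $H$ near $X$, and too large in the range $H\gg X^{10/13}$. Your fallback $X^{1/2}T^{1/2}$ is also $X^{3/4}$, so neither yields $X^{1/2+2\varepsilon}$.

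What is missing is to pair the second moment with the pointwise $1/|t|$ decay of the weight block by block, not globally. On $\Re s=\tfrac12$ one has $|((x')^s-x^s)/s|\ll X^{1/2}/|t|$, and likewise $|\tilde w(\tfrac12+it)|\ll X^{1/2}/|t|$. So for each dyadic block, Cauchy--Schwarz together with \eqref{lindelof} and \eqref{ast} gives
\[
\int_{T'}^{2T'}\bigl|L(f,\chi,\tfrac12+it;q_0,q_1)\bigr|\,\frac{X^{1/2}}{t}\,dt\ll\frac{X^{1/2}}{T'}\,(T')^{1/2}\bigl(q^{3\varepsilon^{2}}(T')^{1+\varepsilon^{2}}\bigr)^{1/2}\ll X^{1/2}(qT')^{2\varepsilon^{2}},
\]
uniformly in $T'$. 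The $O(\log X)$ blocks up to $T\asymp X^{1/2}$ then contribute $X^{1/2}(qX)^{2\varepsilon^{2}}\log X$. Your figure $X^{1/2}T^{1/2}$ comes from losing the factor $1/|s|$, equivalently from one Cauchy--Schwarz on all of $[-T,T]$ using $\int|((x')^s-x^s)/s|^2\,dt\ll X$.

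With this corrected, the plan you abandoned already works, and it is precisely the paper's proof. Its ingredients are:
\begin{itemize}
\item truncated Perron at a height $T_0\in[X^{1/2},2X^{1/2}]$ supplied by Lemma \ref{moving2};
\item truncation error $X^{1+\varepsilon}/T$;
\item horizontal segments bounded by $q^{\varepsilon^{2}}X^{1+\varepsilon}T^{-1+\varepsilon}$;
\item principal-character residues reassembling into $C_q(x'-x)$;
\item the sum over $\chi$ and $q=q_0q_1$ exactly as you describe.
\end{itemize}
Together these give $q^{1/2+2\varepsilon^{2}}X^{1/2+2\varepsilon}$, and no smoothing is needed.
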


\begin{proof}
First, we use the identity
\[
\sum_{n=1}^{\infty} f(n)\, e\!\left(\frac{an}{q}\right)n^{-s}
=
\sum_{q=q_0q_1}\frac{1}{\phi(q_1)\,q_0^{s}}
\sum_{\chi \bmod q_1}\tau(\overline{\chi})\,\chi(a)
\sum_{\substack{n=1\\(n,q_1)=1}}^{\infty} f(q_0n)\chi(n)\,n^{-s}.
\]

Let
\[
L(\chi,s):=\sum_{n=1}^{\infty} f(n)\chi(n)\,n^{-s},
\] 
$\chi_{q_1,0}$ denote the principal character modulo $q_1$,
and let $L^{\ast}(\chi,s)$ denote the Dirichlet series modified to reflect the condition $(n,q_1)=1$ and the substitution $n\mapsto q_0n$.
Using \eqref{ast}, we have
\[
\bigl|L(\chi,s)^{-1}L^{\ast}(\chi,s)\bigr|
\ll_{\varepsilon} q^{\varepsilon^{2}}
\qquad \text{for } \Re(s)\in[1/2,1+\varepsilon].
\]

Applying Perron's formula, we obtain, for any $0<T\ll_{\varepsilon} X^{1-\varepsilon}$,
\begin{equation}\label{perron-attach}
\begin{split}
\sum_{x \le n \le x'} f(n)\, e\!\left(\tfrac{an}{q}\right)
&=
\sum_{q=q_0q_1}\frac{1}{\phi(q_1)}
\sum_{\chi \bmod q_1}\tau(\overline{\chi})\,\chi(a)\,
\frac{1}{2\pi i}
\int_{1+\varepsilon-iT}^{1+\varepsilon+iT}
L^{\ast}(\chi,s)\,\frac{(x')^{s}-x^{s}}{q_0^{s}s}\,ds \\
&\quad + O_{\varepsilon}\!\left(\sum_{q=q_{1}q_{0}} \frac{q_{1}^{3/2}}{\phi(q_{0})}\left(\frac{X}{q_{1}}\right)^{1+2\varepsilon}T^{-1}\bigr)\right).
\end{split}
\end{equation}
Using \eqref{lindelof} together with Lemma \ref{moving2}
(for convenience, let \(T\) denote the parameter $T_{0}$ in Lemma \ref{moving2}),
 we shift the line of integration and obtain
\[
\int_{1+\varepsilon-iT}^{1+\varepsilon+iT}
L^{\ast}(\chi,s)\,\frac{(x')^{s}-x^{s}}{q_0^{s}s}\,ds
=
\operatorname{Res}_{s=1} L^{\ast}(\chi,s)\,\frac{x'-x}{q_0}
+O\!\bigl(\left|F_{\chi,q_0,q_1,T}(x')\right|+\left(\frac{X}{q_{0}}\right)^{1+\varepsilon}T^{-1+\varepsilon}q^{\varepsilon^{2}}\bigr),
\]
where
\[
F_{\chi,q_0,q_1,T}(x')
:=
\int_{1/2-iT}^{1/2+iT}
L^{\ast}(\chi,s)\,\frac{(x')^{s}-x^{s}}{q_0^{s}s}\,ds.
\]

Moreover, for any $T'\gg 1$,
\[
\int_{1/2+iT'}^{1/2+2iT'}
L^{\ast}(\chi,s)\,\frac{(x')^{s}-x^{s}}{q_0^{s}s}\,ds
\ll \left(\frac{x}{q_0}\right)^{1/2}(qT')^{\varepsilon^{2}}.
\]
Using a dyadic decomposition with $T=X^{1/2}$, we obtain
\[
\sum_{q=q_0q_1}\ \sum_{\substack{\chi \bmod q_1\\ \chi\neq \chi_{q_1,0}}}
\frac{\tau(\overline{\chi})\chi(a)}{\phi(q_1)}
\left(\int_{1/2-iT}^{1/2+iT}
L^{\ast}(\chi,s)\,\frac{(x')^{s}-x^{s}}{q_0^{s}s}\,ds
+O_{\varepsilon}\!\left(q^{\varepsilon^{2}}X^{1+\varepsilon}T^{\varepsilon-1}\right)\right)
\ll_{\varepsilon} q^{1/2+2\varepsilon^{2}}\,X^{1/2+2\varepsilon}.
\]

Finally, since $L^{\ast}(\chi_{q_1,0},s)$ has a simple pole at $s=1$, we have
\[
\operatorname{Res}_{s=1}L^{\ast}(\chi_{q_1,0},s)=D_{q_1,q_0}
\qquad\text{(see \eqref{Dq})}.
\]
Hence, the contribution of the principal characters equals $C_q\,(x'-x)$.
This yields the desired bound for $E(x')$.
\end{proof}

Combining the preceding lemmas yields the following.

\begin{prop}\label{majorprop}
Let $x\in[X,2X]$ and $|\gamma|\ \leq \beta$. Then
\[
\sum_{x \le n \le x+2H} f(n)\, e\!\left(\frac{an}{q} + \gamma n\right)
=
\int_{x}^{x+2H} C_q\, e(\gamma y)\,dy
+ O_{\varepsilon}\!\left(H^{8\varepsilon}\,q^{1/2+2\varepsilon^{2}}\,X^{1/2+2\varepsilon}\right).
\]
\end{prop}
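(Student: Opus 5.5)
The plan is to combine Lemma \ref{sum} with Lemma \ref{attachingrationalerror}; the only real work is bookkeeping of sizes. Fix $x\in[X,2X]$ and $|\gamma|\le\beta=H^{-1+8\varepsilon}$, and apply Lemma \ref{sum} with this $\gamma$. This produces the main term
\[
\int_{x}^{x+2H} C_q\, e(\gamma y)\,dy
\]
exactly as stated. The error term is
\[
O\!\left(E(x+2H)+\int_{x}^{x+2H}|\gamma|\,E(x'')\,dx''\right),
\]
where $E(x'')$ is the discrepancy between $\sum_{x\le n\le x''} f(n)e(an/q)$ and $C_q(x''-x)$.

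Every $x''$ appearing in this error lies in $[x,x+2H]$. So Lemma \ref{attachingrationalerror} applies uniformly in $x''$ and gives
\[
E(x'')\ll_\varepsilon q^{1/2+2\varepsilon^2}X^{1/2+2\varepsilon}.
\]
The first error term $E(x+2H)$ is therefore already acceptable.

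For the integral term, I bound $|\gamma|\le\beta$ and use that the integration range has length $2H$. This gives
\[
\int_{x}^{x+2H}|\gamma|\,E(x'')\,dx'' \ll 2H\beta\,q^{1/2+2\varepsilon^2}X^{1/2+2\varepsilon} \ll H^{8\varepsilon}q^{1/2+2\varepsilon^2}X^{1/2+2\varepsilon}.
\]
Adding the two error contributions, and noting $1\le H^{8\varepsilon}$, yields the stated $O_\varepsilon\!\left(H^{8\varepsilon}q^{1/2+2\varepsilon^2}X^{1/2+2\varepsilon}\right)$.

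The one point needing care is that Lemma \ref{attachingrationalerror} must be uniform in $x'\in[x,x+2H]$ and in the reduced residue $a$. Its proof via Perron's formula and the moment bound \eqref{lindelof} depends only on $q$, $X$, and $\varepsilon$, so uniformity holds and no further obstacle is expected. If $(a,q)>1$, I would first reduce the fraction, which only lowers $q$.
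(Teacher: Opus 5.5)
Your proposal is correct and follows the paper's argument: Lemma~\ref{sum} combined with the uniform bound of Lemma~\ref{attachingrationalerror} for $x''\in[x,x+2H]$, where the factor $2H\beta=2H^{8\varepsilon}$ gives the $H^{8\varepsilon}$. One small caveat concerns your closing remark on $(a,q)>1$: reducing $a/q$ to $a'/q'$ changes the main term to $C_{q'}$ rather than $C_q$ (for $a=0$, $f\equiv 1$, $q=p$ one has $C_p=0$ but $C_1=1$), but the proposition is only intended and used for $(a,q)=1$, so your proof is unaffected.
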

Note that 
\[
\sum_{q=q_0q_1}\frac{\mu(q_1)}{\phi(q_1)\,q_0}\,
\operatorname{Res}_{s=1}L^{\ast}(\chi_{q_1,0},s)
\ll
\sum_{q=q_0q_1}\left|\frac{\mu(q_1)}{\phi(q_1)\,q_0}\,
\operatorname{Res}_{s=1}L^{\ast}(\chi_{q_1,0},s)\right|
\ll q^{2\varepsilon^{2}-1}.
\]
By Proposition \ref{majorprop}, the major-arc contribution equals
\begin{equation}\label{totalintegral}
\begin{split}
\frac{1}{2H} \sum_{q < Q} \sum_{\substack{a \bmod q \\ (a,q)=1}}
\int_{|\gamma| < \beta} \int_{X}^{2X} &
\left( \int_{x}^{x+2H} C_q\, e(\gamma y)\, dy
+ O_{\varepsilon}\!\left(H^{8\varepsilon} q^{1/2 + 2\varepsilon^{2}} X^{1/2 + 2\varepsilon}\right) \right)^{2} \\
&\times \left( \int_{x}^{x+2H} C_q\, e(-2\gamma y)\, dy
+ O_{\varepsilon}\!\left(H^{8\varepsilon} q^{1/2 + 2\varepsilon^{2}} X^{1/2 + 2\varepsilon}\right) \right)
\, dx\, d\gamma .
\end{split}
\end{equation}

Since
\[
\int_{x}^{x+2H} C_q\, e(\gamma y)\, dy \ll q^{2\varepsilon^{2}-1}\,H,
\]
the total contribution of the error terms in \eqref{totalintegral} is
\[
\ll X^{3/2+2\varepsilon}Q^{1/2+6\varepsilon^{2}}\,H^{16\varepsilon}
\;+\;
Q^{7/2+6\varepsilon^{2}}\,X^{5/2+6\varepsilon}\,H^{-2+32\varepsilon}.
\]
With $Q=XH^{-1+5\varepsilon}$ and $H\gg X^{10/13+100\varepsilon}$, the above contribution is bounded by 
\[
\ll_{\varepsilon} XH^{1-\varepsilon}.
\]

\subsection{The main term}
We now extract the asymptotic from
\[
\frac{1}{2H} \sum_{q < Q} \sum_{\substack{a \bmod q \\ (a,q)=1}}
\int_{|\gamma| < \beta} \int_{X}^{2X}
\left( \int_{x}^{x+2H} C_q\, e(\gamma y)\, dy \right)^{2}
\left( \int_{x}^{x+2H} C_q\, e(-2\gamma y)\, dy \right)
\, dx\, d\gamma .
\]

The main term becomes
\[
\frac{1}{2H} \sum_{q < Q} \sum_{\substack{a \bmod q \\ (a,q)=1}} C_q^3
\left(\int_{\gamma\in\mathbb{R}}-\int_{|\gamma|>\beta}\right)
\left(
\int_{X}^{2X}
\left(\int_{x}^{x+2H} e(\gamma y)\,dy\right)^{2}
\left(\int_{x}^{x+2H} e(-2\gamma y)\,dy\right)
\,dx
\right)\,d\gamma.
\]

Note that
\[
\sum_{q>Q}\ \sum_{\substack{a \bmod q\\ (a,q)=1}} C_q^{3}\ll 1.
\]
Using the trivial bound $\int_{x}^{x+H} e(\gamma y)\,dy\ll |\gamma|^{-1}$, the tail integral
$\int_{|\gamma|>\beta}\cdots\,d\gamma$ contributes at most $XH^{1-16\varepsilon}$.

Let $1_{[a,b]}$ denote the indicator of $[a,b]$, and denote the Fourier transform of a function $b(t)$ as
\[
\widehat{b}(\zeta):=\int_{\mathbb{R}} b(t)\,e(-\zeta t)\,dt.
\]
Then the first integral equals
\begin{equation}\label{Poissonset}
\int_{X}^{2X}\int_{\mathbb{R}}
\Bigl(\widehat{1_{[x,x+2H]}}(-\gamma)\Bigr)^{2}\,
\widehat{1_{[x,x+2H]}}(2\gamma)\,d\gamma\,dx.
\end{equation}
Let $G(\gamma):=\bigl(\widehat{1_{[x,x+2H]}}(\gamma/2)\bigr)^{2}$. Then
\[
\int_{\mathbb{R}}
\Bigl(\widehat{1_{[x,x+2H]}}(-\gamma)\Bigr)^{2}\,
\widehat{1_{[x,x+2H]}}(2\gamma)\,d\gamma
=\frac{1}{2}\int_{\mathbb{R}}\,G (\gamma)  \widehat{1_{[x,x+2H]}}(-\gamma) d\gamma .
\]
Since $$\int_{\mathbb{R}}\,G (\gamma)  \widehat{1_{[x,x+2H]}}(-\gamma)d \gamma = \left(G \ast  \widehat{1_{[x,x+2H]}}\right)(0), $$
using the inverse Fourier transform for the convolution, we have 
$$\left(G \ast  \widehat{1_{[x,x+2H]}}\right)(0)= \int_{\mathbb{R}} \widehat{G}(\alpha) 1_{[x,x+2H]}(\alpha) d\alpha.$$
Note that $$\widehat{G}(\alpha) =2\left(1_{[x,x+2H]} (\gamma) \ast 1_{[x,x+2H]}( \gamma) \right)(2\alpha).$$
Since $\alpha$ is supported in $[x,x+2H]$,  
the first integral in \eqref{Poissonset} is 
\begin{equation}\begin{split}
\int_{X}^{2X} & \int_{x}^{x+2H}\int_{\mathbb{R}}
\,1_{[\max(2\alpha-x-2H,x), \min(2\alpha-x,x+2H)]}(\gamma)\,d\gamma\,d\alpha\,dx
\\&=
\int_{X}^{2X}\int_{x}^{x+H} (2\alpha - 2x)\,d\alpha\,dx + \int_{X}^{2X}\int_{x+H}^{x+2H}  (2x +4H -2 \alpha) d\alpha\,dx
\\&=2H^{2}X.
\end{split}\end{equation}
which evaluates to $2H^{2}X$. Therefore, the main term is
\[
\frac{X}{H}\sum_{q<Q}\ \sum_{\substack{a \bmod q\\(a,q)=1}} C_q^{3}\,H^{2}.
\]

\subsection{Minor arcs}\label{minor}
The contribution from the minor arcs is bounded by
\[
H^{-1}\sup_{\substack{X<x\le 2X\\ \alpha\in\mathfrak{m}}}\bigl|S_f(-2\alpha;x)\bigr|
\int_{X}^{2X}\int_{0}^{1}\bigl|S_f(\alpha;x)\bigr|^{2}\,d\alpha\,dx
\ll
\sup_{\substack{X<x\le 2X\\ \alpha\in\mathfrak{m}}}\bigl|S_f(-2\alpha;x)\bigr|\cdot X(\log X)^{k^{2}}.
\]
Therefore, by Lemma \ref{Lemma13}, the proof is complete.
\section{Proof of Theorem \ref{Theorem2}}\label{Sec4}
By  \eqref{uniformb},
\begin{equation}\label{majorTheorem2}
\begin{split}
\frac{1}{2H} \int_{X}^{2X}\int_{[0,1]} &S_{f_{1}}(\alpha;x)S_{f_{3}}(\alpha;x)\,S_{f_{2}}(-2\alpha;x)\,d\alpha\,dx
\\&\ll
\frac{1}{H} \,\int_{X}^{2X} \sup_{\alpha \in [0,1]}|S_{f_{1}}(\alpha;x)|\
\int_{0}^{1}|S_{f_{2}}(-2\alpha;x)S_{f_{3}}(\alpha;x)| d\alpha dx
\\&\ll XH^{1-\varepsilon}.
\end{split}
\end{equation}

\section*{Acknowledgments}
The author would like to thank Kunjakanan Nath for helpful discussions. 
\medskip

\bibliographystyle{plain}

\end{document}